\newtheorem{theorem}{Theorem}
\newtheorem{corollary}[theorem]{Corollary}
\newtheorem{lemma}[theorem]{Lemma}
\newtheorem*{remark}{Remark}
\newtheorem*{definition}{Definition}
\newtheorem*{proposition}{Proposition}
\newcommand{\N}{\mathbb{N}}
\newcommand{\R}{\mathbb{R}}
\newcommand{\C}{\mathbb{C}}
\newcommand{\EE}{\mathbb{E}}
\newcommand{\PP}{\mathbb{P}}
\newcommand{\di}{\,\mbox{d}}
\DeclareMathOperator{\Erfc}{Erfc}
\DeclareMathOperator{\Id}{Id}
\DeclareMathOperator{\Res}{Res}
\DeclareMathOperator{\tr}{trace}
\DeclareMathOperator{\dist}{dist}
\def\1{{\mathchoice {1\mskip-4mu\mathrm l}   
{1\mskip-4mu\mathrm l}
{1\mskip-4.5mu\mathrm l} {1\mskip-5mu\mathrm l}}}
\begin{document}
\title{Persistence exponents via perturbation theory:\ AR(1)-processes}
\author{\renewcommand{\thefootnote}{\arabic{footnote}}Frank Aurzada\footnotemark[1]\and \renewcommand{\thefootnote}{\arabic{footnote}}Marvin Kettner\footnotemark[1]}

\footnotetext[1]{Technische Universit\"at Darmstadt, Schlossgartenstra\ss e 7, 64287 Darmstadt, Germany. E-mail: aurzada@mathematik.tu-darmstadt.de, kettner@mathematik.tu-darmstadt.de}

\maketitle

\begin{abstract}
For AR(1)-processes $X_n=\rho X_{n-1}+\xi_n$, $n\in\N$, where $\rho\in\R$ and $(\xi_i)_{i\in\N}$ is an i.i.d.\ sequence of random variables,\ we study the persistence probabilities $\PP(X_0\ge 0,\dots, X_N\ge 0)$ for $N\to\infty$. For a wide class of Markov processes a recent result \cite{Aurzada} shows that these probabilities decrease exponentially fast and that the rate of decay can be identified as an eigenvalue of some integral operator. We discuss a perturbation technique to determine a series expansion of the eigenvalue in the parameter $\rho$ for normally distributed AR(1)-processes.
\end{abstract}

\noindent \textbf{Keywords.} autoregressive process; eigenvalue problem; integral equation; Markov chain; persistence; perturbation theory

\section{Introduction}

The major question in persistence is to understand the behaviour of a
stochastic process in the case it has an unusually long excursion. A first
goal in this context is to compute the rate of decay of the probability
\[\PP(X_0 \geq  0, \ldots, X_N\geq 0),\qquad \text{as $N\to\infty$},\]
where $(X_n)_{n\in\N}$ is a real-valued stochastic process.

Persistence probabilities have received significant attention both
classically and recently. We refer to the surveys
\cite{braymajumdarschehr} (from a theoretical physics point of view) and
\cite{aurzadasimon} (for a mathematics point of view) as well as to the
monographs \cite{metzleretal,redner}.

The guiding idea for the relevance of persistence in the context of
physical systems can be sketched as follows. Consider a complicated
spatial physical system started in some disordered state. When looking at
some specific spatial point, one can ask the question when the state of
this point has changed significantly compared to the initial state. 
The probability of this taking rather long, which is
clearly a persistence probability, is considered to be a measure for the
relaxation time of the system. Even though the system may be very
complicated due to non-trivial interactions, this quantity might still be
accessible, contrary to global quantities.

The present paper deals with discrete time Markov chains on a general
state space. It is well-known that in the case of Markov processes,
non-exit probabilities should have close relations to eigenvalues of
operators. However, establishing such a connection is often non-trivial.
The purpose of the present paper is to establish such a connection for a
specific class of Markov chains and use it to apply results from perturbation
theory for linear operators \cite{Kato}. The setup is as follows.

Let $(\xi_i)_{i\ge 1}$ be a sequence of i.i.d.\ random variables with density $\phi$ and let $\rho\in\R$ be a constant. Moreover, let $X_0$ be a random variable independent of $(\xi_i)_{i\ge 1}$. A one-dimensional autoregressive process is defined by
\[X_n:= \rho X_{n-1}+\xi_n, \quad n\ge 1.\]

The process $(X_n)_{n\in\N}$ is a Markov chain with starting point $X_0$ and transition probabilities $p(x,A)=\int_A \phi(y-\rho x) \di y$.

Although the structure of this process is quite simple, it is difficult to obtain asymptotic results of the persistence probabilities and the exact asymptotic behaviour is still an open problem. Very often these probabilities tend to zero exponentially fast and we are interested in the rate of the decay, the so-called persistence exponent.
Let $P\colon L^\infty(\R)\to L^\infty(\R)$ be defined by $Pf(x):=\int_\R f(y)p(x,\mbox{d} y)$. Furthermore, we set 
\[ P^+\colon L^\infty([0,\infty))\to L^\infty([0,\infty)),\quad P^+(f)(x):= P(f\1_{[0,\infty)})(x)\]
and let $X_0\sim\mu$. Based on the observation

\begin{equation}\label{eq1}
\PP(X_0\ge 0,\dots,X_N\ge 0)=\int_0^\infty (P^+)^N(\1)\di\mu,
\end{equation}

we relate the persistence exponent to an eigenvalue of (a modification of) $P^+$. In addition, we aim to give a series expansion (in the parameter $\rho$) of the desired persistence exponent. Since the proofs depend on a certain transform of the integral operator, we will however restrict our attention to the case where the $(\xi_i)_{i\ge 1}$ are Gaussian.

The outline of this paper is as follows. Section \ref{Section Perron-Frobenius} establishes the relation between the persistence exponent and the largest eigenvalue of some self-adjoint Hilbert-Schmidt integral operator. It is worth pointing out that Lemma \ref{Lemma Perron-Frobenius}, which is stated also in that section, may make it possible to generalize results of this paper to other Markov processes. In Section \ref{Section Peturbation AR Gaussian}, our main result, the series expansion of the persistence exponent, is stated. Section \ref{Section related work} contains related work, possible generalizations, and further remarks. Section \ref{Section Proofs} is devoted to the proofs. In Appendix \ref{appendix} we give a brief exposition of auxiliary results from perturbation theory.\\

%%%%%%%%%%%%%%%%%%%%%%%%%%%%%%%%%%%%%%%%%%%%%%%%%%%%%%%%%%%%%%%%%%%%%%%%%%%%%%%%%%%%%%%%%%%%%%%%%%%%%%%%%%%%%%%%%%%%%%%%%%%%%%%%%%%%%%%%%%%%%%%%%%%%%%%%%%%%%%%%%%%%%%%%%%%%%%%%%%%%%%%%%%%%%%%%%%%%%%%%%%%%%%%%%%%%%%%%%%%%%%%%%%%%%%%%%%%%%%%%%%%%%%%%%%%%%%%%%%%%%%%%%%%%%%%%%%%%%%%%%%%%%%%%%%%%%%%%%%%%%%%%%%%%%%%%%

\section{Results}

\subsection{Connection between persistence probabilities and an \newline eigenvalue problem} \label{Section Perron-Frobenius}
Unless otherwise stated we assume that $\xi_1$ and $X_0$ are standard normally distributed.
The canonical integral operator $P^+$ is not suitable to relate the persistence exponent to an eigenvalue, due to compactness problems, i.e.\ for $\rho>0$ and any $n\ge 1$, the integral operator $(P^+)^n$ is not compact \cite[Remark 2.13]{Baumgarten2}. For this reason, we consider a modification of the canonical integral operator which satisfies a certain compactness and irreducibility condition and allows to establish the connection between the persistence exponent and an eigenvalue. Moreover, this operator is very suitable for perturbation theory.

\begin{definition}\label{Defintion of M_rho}
Let $\gamma$ be the standard Gaussian measure on $\R$, i.e. $\di\gamma(x)= \frac{1}{\sqrt{2\pi}}e^{-\frac{x^2}{2}}\di x$. For $-1<\rho<1$, let $M_\rho$ be given by
\[M_\rho\colon L^2([0,\infty),\gamma)\to L^2([0,\infty),\gamma),\quad M_\rho f(x)= \int_0^\infty f(y)m_\rho(x,y)\di \gamma(y),\]
where $m_\rho(x,y):= \frac{1}{\sqrt{1-\rho^2}}e^{-\frac{\rho^2x^2+\rho^2y^2-2\rho xy}{2(1-\rho^2)}}$.
\end{definition}

The operator $M_\rho$ is well-defined, self-adjoint and compact, which is proved in Section \ref{Section Proofs}. Moreover, by the Mehler formula \cite{Mehler}, \cite[Section 4.2]{Janson}, we have
\[m_\rho(x,y) = \sum_{n=0}^\infty \frac{1}{n!}h_n(x)h_n(y)\rho^n.\]
Here, $h_n$ denotes the $n$-th Hermite polynomial given by the formula $h_n(x):= (-1)^n e^{\frac{x^2}{2}}\frac{\di^n}{\di x^n}e^{-\frac{x^2}{2}}$, i.e. the first Hermite polynomials are $h_0(x)=1$, $h_1(x)=x$, $h_2(x)=x^2-1$.

We can now formulate the connection between the persistence probabilities and the eigenvalue problem of $M_\rho$.

\begin{theorem}\label{PE=Largest Eigenvalue}
Let $-1<\rho<1$. Then
\[c_{\rho}\lambda_\rho^N \le \PP(X_0\ge 0,\dots, X_N\ge 0) \le C_\rho \lambda_\rho^N,\]
where $\lambda_\rho\in(0,1)$ is the largest eigenvalue of $M_\rho$ and $c_\rho,C_\rho>0$.
\end{theorem}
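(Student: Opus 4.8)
The starting point is the identity \eqref{eq1}, which expresses the persistence probability as $\int_0^\infty (P^+)^N(\1)\,\di\mu$ with $\mu = \gamma$. The first task is to pass from the non-compact operator $P^+$ on $L^\infty([0,\infty))$ to the nice operator $M_\rho$ on $L^2([0,\infty),\gamma)$. For a normal AR(1)-process the transition density is $p(x,\di y) = \frac{1}{\sqrt{2\pi}} e^{-(y-\rho x)^2/2}\,\di y$, and the stationary density is the standard Gaussian. The plan is to write $(P^+)^N \1(x)$ as an $N$-fold integral over $[0,\infty)^N$ of the product $\prod_{k=1}^N p(x_{k-1},x_k)$ (with $x_0 = x$) against Lebesgue measure, and then perform a similarity transform: extract a factor $e^{-x_k^2/2}$-type weight from each kernel so that the resulting symmetric kernel is exactly $m_\rho(x,y)$ integrated against $\di\gamma$. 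Concretely, $\frac{1}{\sqrt{2\pi}} e^{-(y-\rho x)^2/2} = m_\rho(x,y)\,\frac{1}{\sqrt{2\pi}} e^{-y^2/2} \cdot (\text{something depending on } x \text{ only})$ — one checks that completing the square gives $e^{-(y-\rho x)^2/2} = e^{-y^2/2}\, e^{\rho x y - \rho^2 x^2/2}$, and a further symmetric rewriting produces the factor $m_\rho(x,y)$ up to boundary terms $e^{\rho^2 x^2/2}$, $e^{-\rho^2 y^2/2}$ that telescope along the chain. After this conjugation one obtains
\[
\PP(X_0\ge 0,\dots,X_N\ge 0) = \int_0^\infty (M_\rho^N g)(x)\, h(x)\,\di\gamma(x)
\]
for explicit, strictly positive, bounded functions $g,h$ on $[0,\infty)$ (the telescoping leaves only the endpoint weights). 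The routine but slightly fiddly computation of $g$ and $h$ is the one part I would relegate to Section \ref{Section Proofs}.

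Next I would invoke the spectral theory of $M_\rho$. Since $M_\rho$ is self-adjoint, compact, and has a strictly positive kernel $m_\rho(x,y)>0$ on $(0,\infty)^2$, it is irreducible, and the Krein–Rutman / Perron–Frobenius theorem (this is presumably the content of Lemma \ref{Lemma Perron-Frobenius}) applies: its largest eigenvalue $\lambda_\rho$ is positive, simple, and admits an eigenfunction $\psi_\rho$ that is strictly positive $\gamma$-a.e. on $[0,\infty)$. Expanding $g = \sum_i a_i \psi_i$ in the orthonormal eigenbasis $(\psi_i)$ of $M_\rho$ with eigenvalues $|\lambda_i|\le\lambda_\rho$, and similarly pairing with $h$, gives
\[
\PP(X_0\ge 0,\dots,X_N\ge 0) = \sum_i a_i \langle \psi_i, h\rangle_\gamma\, \lambda_i^N = \lambda_\rho^N\Big( a_0\langle\psi_\rho,h\rangle_\gamma + \sum_{i\ge 1} a_i\langle\psi_i,h\rangle_\gamma (\lambda_i/\lambda_\rho)^N\Big).
\]
Because $g,h>0$ and $\psi_\rho>0$, the leading coefficient $c := a_0\langle\psi_\rho,h\rangle_\gamma = \langle g,\psi_\rho\rangle_\gamma\langle\psi_\rho,h\rangle_\gamma$ is strictly positive, while the remainder is $O((\lvert\lambda_1\rvert/\lambda_\rho)^N) = o(1)$; this gives the two-sided bound with $c_\rho, C_\rho$ any constants bracketing $c(1+o(1))$, say $c_\rho = c/2$ and $C_\rho = 2c$ for $N$ large, and then adjusting the finitely many small-$N$ terms by shrinking $c_\rho$ and enlarging $C_\rho$ (all persistence probabilities are strictly positive since the densities are everywhere positive). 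Finally, $\lambda_\rho<1$ follows because the persistence probabilities are summable-ish — more precisely $\lambda_\rho = \lim \PP(\cdots)^{1/N} \le 1$, and strict inequality $\lambda_\rho<1$ because $\PP(X_0\ge0,\dots,X_N\ge0)\to 0$ (the events are nested with intersection of probability zero, as a random walk-like drift argument or the strict positivity of $\PP(X_1<0)$ shows $\PP(\cdots)\le (1-\delta)^{\lfloor N/k\rfloor}$ for some $\delta>0$).

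The main obstacle I anticipate is the conjugation step: verifying that the endpoint weights genuinely telescope and that the residual functions $g$ and $h$ lie in $L^2(\gamma)$ (equivalently, that $(M_\rho^N g)$ is well-defined and the pairing with $h$ converges). One must be careful that $g(x)$ or $h(x)$ may grow like $e^{c x^2}$ with $c>0$; if $c$ is too large these are not in $L^2(\gamma)$ and the clean spectral expansion fails. I expect that completing the square correctly yields weights of the form $e^{\rho^2 x^2/2}$ which, since $\rho^2<1$, are comfortably in $L^2(\gamma)$ — but pinning down the exact exponent and confirming it is $<\tfrac12$ in absolute value in the exponent of the Gaussian is the crux. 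A secondary, more minor obstacle is making the Perron–Frobenius input fully rigorous in the $L^2(\gamma)$ setting on the half-line; this is exactly why Lemma \ref{Lemma Perron-Frobenius} is isolated, and I would simply quote it, checking only its hypotheses (self-adjointness, compactness, positivity/irreducibility of $m_\rho$) for our $M_\rho$.
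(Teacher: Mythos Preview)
Your conjugation step does not go through as written, and this is the central gap. You assert that the standard-normal transition kernel $\frac{1}{\sqrt{2\pi}}e^{-(y-\rho x)^2/2}$ can be written as $m_\rho(x,y)\,\frac{1}{\sqrt{2\pi}}e^{-y^2/2}$ times telescoping boundary weights in $x$ and $y$ separately. But a direct computation gives
\[
\frac{\frac{1}{\sqrt{2\pi}}e^{-(y-\rho x)^2/2}}{m_\rho(x,y)\,\frac{1}{\sqrt{2\pi}}e^{-y^2/2}}
=\sqrt{1-\rho^2}\,\exp\!\Big(\tfrac{\rho^2}{2(1-\rho^2)}(y-\rho x)^2\Big),
\]
which contains an irreducible $xy$ cross-term and hence cannot be written as $w_1(x)w_2(y)$. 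Equivalently: the cross-term in $m_\rho$ is $\frac{\rho}{1-\rho^2}xy$, while completing the square in the original kernel only produces $\rho xy$; no multiplicative weight in $x$ alone or $y$ alone can alter that coefficient. What a conjugation \emph{does} give you is a different symmetric kernel $k(x,y)=e^{\rho xy-\rho^2(x^2+y^2)/4}$ against $\gamma$, and the resulting operator is similar to $M_\rho$ only via a dilation, not an identity.

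The paper sidesteps this entirely by a rescaling rather than a conjugation: one sets $\tilde X_n=\sqrt{1-\rho^2}\,X_n$, which leaves the persistence probability unchanged but changes the innovation variance to $1-\rho^2$. The new transition kernel is then \emph{exactly} $\tilde p(x,\di y)=m_\rho(x,y)\,\di\gamma(y)$, so no boundary weights appear at all; one gets $\PP(X_0\ge0,\dots,X_N\ge0)=\int_0^\infty M_\rho^N(\1)\,\di\tilde\mu$ with $\tilde\mu=\mathcal N(0,1-\rho^2)$. From there, your spectral-expansion argument is a legitimate alternative to the paper's Lemma \ref{Lemma Perron-Frobenius} (which instead bounds $\1\ge f/\|f\|_\infty$ using boundedness of the Perron eigenfunction, imported from \cite{Aurzada}); both require the Jentzsch/Perron--Frobenius input that $\lambda_\rho$ is simple with a strictly positive eigenfunction, and both use self-adjointness for the sharp upper bound $\|M_\rho^N\|=\lambda_\rho^N$. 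Note also that Lemma \ref{Lemma Perron-Frobenius} in the paper is not the Perron--Frobenius statement itself (that is the preceding Proposition) but precisely the two-sided bound you are trying to reprove.
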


\subsection{Perturbation theory for the persistence exponent}\label{Section Peturbation AR Gaussian}

In the following theorem, we prove the existence of a power series for the desired persistence exponent.

\begin{theorem}\label{Largest Eigenvalue holomorphic}
For $\lambda_\rho$ from Theorem \ref{PE=Largest Eigenvalue} we have
\[\lambda_\rho = \sum_{n=0}^\infty K_n\rho^n,\quad K_n\in\R,\]
for $\rho\in (-r_0,r_0)$, where
\[r_0\ge \frac{1}{3}.\]
\end{theorem}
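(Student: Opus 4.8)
The plan is to realize $\lambda_\rho$ as an isolated, simple eigenvalue of a holomorphic family of operators and then invoke the analytic perturbation theory of Kato (recorded in the Appendix). First I would extend the definition of $M_\rho$ to complex $\rho$. The Mehler kernel $m_\rho(x,y)=\frac{1}{\sqrt{1-\rho^2}}e^{-\frac{\rho^2x^2+\rho^2y^2-2\rho xy}{2(1-\rho^2)}}$ makes sense for complex $\rho$ as long as we stay away from the singularities at $\rho=\pm1$; the natural move is to use the Mehler series $m_\rho(x,y)=\sum_{n\ge0}\frac{1}{n!}h_n(x)h_n(y)\rho^n$ and bound the Hilbert--Schmidt norm $\|M_\rho\|_{HS}^2 = \iint_{[0,\infty)^2} |m_\rho(x,y)|^2\,\di\gamma(x)\,\di\gamma(y)$ using $\int_\R h_n(x)^2\,\di\gamma(x)=n!$ and Cauchy--Schwarz, to see that $\rho\mapsto M_\rho$ is a Hilbert--Schmidt-operator-valued analytic function on the disc $\{|\rho|<r\}$ for a suitable $r$; in fact the term-by-term bound $\|M_\rho\|_{HS}\le \sum_n |\rho|^n = \frac{1}{1-|\rho|}$ (after splitting the kernel and using orthonormality of $h_n/\sqrt{n!}$ on the half-line after reflection, or simply estimating on the full line) converges for $|\rho|<1$. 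So $(M_\rho)$ is a bounded-holomorphic, indeed Hilbert--Schmidt-valued holomorphic, family on a complex neighbourhood of $(-1,1)$.

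Next I would pin down the spectral situation at $\rho=0$. At $\rho=0$ the kernel degenerates to $m_0(x,y)\equiv1$, so $M_0$ is the rank-one projection $f\mapsto \bigl(\int_0^\infty f\,\di\gamma\bigr)\cdot\1$ (up to the constant $\gamma([0,\infty))=1/2$); hence $M_0$ has the simple eigenvalue $K_0=1/2$ and the rest of its spectrum is $\{0\}$. By Theorem \ref{PE=Largest Eigenvalue} and the Perron--Frobenius-type lemma (Lemma \ref{Lemma Perron-Frobenius}), for real $\rho\in(-1,1)$ the top eigenvalue $\lambda_\rho$ is simple and strictly separated from the rest of the spectrum. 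Kato's theorem on analytic families (Appendix) then gives that, locally around $\rho=0$, the eigenvalue staying near $1/2$ is a single-valued holomorphic function of $\rho$, and its eigenprojection is holomorphic too; this holomorphic branch coincides with $\lambda_\rho$ on the real axis by uniqueness of analytic continuation and the identification in Theorem \ref{PE=Largest Eigenvalue}. Expanding this holomorphic function in a power series at $\rho=0$ yields $\lambda_\rho=\sum_{n\ge0}K_n\rho^n$ with real coefficients $K_n$ (real because $\lambda_\rho$ is real for real $\rho$, so all derivatives at $0$ are real).

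The remaining and genuinely quantitative point is the lower bound $r_0\ge\frac13$ on the radius of convergence, and this is the step I expect to be the main obstacle. The radius of convergence of the eigenvalue branch is controlled by how far one can analytically continue while keeping the eigenvalue isolated, which by Kato's estimates is governed by the size of the resolvent $(M_\rho-z)^{-1}$ on a fixed circle $\Gamma$ in the complex $z$-plane separating $K_0=1/2$ from $0$. The standard Neumann-series argument gives: if $\Gamma=\{|z-1/2|=1/4\}$, say, then $(M_\rho-z)^{-1}$ exists and is uniformly bounded as long as $\|M_\rho-M_0\|$ is small enough relative to $\operatorname{dist}(\Gamma,\sigma(M_0))=1/4$. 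One writes $M_\rho - M_0$ via its kernel $m_\rho - 1$, estimates $\|M_\rho-M_0\|\le \|M_\rho - M_0\|_{HS} \le \sum_{n\ge1}|\rho|^n = \frac{|\rho|}{1-|\rho|}$ using the Mehler series and orthonormality, and then requires $\frac{|\rho|}{1-|\rho|}<\frac14$, i.e.\ $|\rho|<\frac15$ — which is not quite enough. To reach $\frac13$ one must be more careful: use the better contour (the spectral gap of $M_0$ is actually the whole interval between $0$ and $1/2$, so one may take a circle of radius close to $1/4$ but exploit that $M_\rho-M_0$ has no constant term and that its leading correction in $\rho$ is rank-controlled), or sharpen the Hilbert--Schmidt bound on the half-line (the relevant integrals $\int_0^\infty h_n^2\,\di\gamma$ are strictly less than $n!$), or apply Kato's refined reduction to the finite-dimensional eigenprojection and estimate there. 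I would carry out the resolvent estimate on the circle $|z-1/2|=1/4$ with the sharpened bound $\|M_\rho - M_0\| \le \sum_{n\ge1} c_n |\rho|^n$ where $c_n = \frac{1}{n!}\|h_n\1_{[0,\infty)}\|_{L^2(\gamma)}^2 < 1$, and check that the resulting constraint on $|\rho|$ is satisfied for all $|\rho|<\frac13$; verifying that the $c_n$ decay fast enough to buy the improvement from $\frac15$ to $\frac13$ is the technical heart of the argument.
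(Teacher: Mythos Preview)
Your plan is essentially the paper's proof: write $M_\rho=\sum_{n\ge 0}\rho^n M^{(n)}$ with $M^{(n)}$ the rank-one operator with kernel $a_n(x,y)=\frac{1}{n!}h_n(x)h_n(y)$, identify $M_0$ as rank one with simple eigenvalue $\tfrac12$ and remaining spectrum $\{0\}$, and then invoke Kato's analytic perturbation theory (the appendix results) to get a holomorphic eigenvalue branch and a Neumann-series lower bound on its radius of convergence.

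The only place where you go astray is the quantitative step, and the fix is simpler than you expect. You compute $c_n=\frac{1}{n!}\int_0^\infty h_n^2\,\di\gamma$; since $h_n$ has definite parity, $h_n^2$ is even, so $\int_0^\infty h_n^2\,\di\gamma=\tfrac12\int_\R h_n^2\,\di\gamma=\tfrac{n!}{2}$ \emph{exactly}. Thus $c_n\equiv\tfrac12$ for every $n$ (they do not decay), and $\|M^{(n)}\|\le\|M^{(n)}\|_{HS}=\tfrac12$. This is precisely the paper's computation $\iint_{[0,\infty)^2} a_n(x,y)^2\,\di\gamma(x)\,\di\gamma(y)=\tfrac14$. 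Feeding $\|M^{(n)}\|\le\tfrac12$ into your own Neumann-series estimate on the circle $|z-\tfrac12|=\tfrac14$ gives $\|M_\rho-M_0\|\le\sum_{n\ge1}\tfrac12|\rho|^n=\frac{|\rho|}{2(1-|\rho|)}<\tfrac14$ iff $|\rho|<\tfrac13$, which is exactly the bound claimed. Equivalently, in the language of the appendix, Corollary~\ref{Cor: radius of convergence HR} with $a=\tfrac12$, $c=1$, $d=\dist(\tfrac12,\{0\})=\tfrac12$ gives $r_0\ge \bigl(\tfrac{2a}{d}+c\bigr)^{-1}=\tfrac13$. So no ``sharpening'' beyond the parity observation is needed; your initial estimate $\sum_{n\ge1}|\rho|^n$ was simply off by the factor $\tfrac12$ coming from the half-line.
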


\begin{remark}
Based on numerical calculations, we expect that the radius of convergence is significantly larger than the value $\frac{1}{3}$ that we can prove analytically. It remains an interesting open problem to determine the radius of convergence. \\
However, the radius of convergence can be at most $1$. It was proved e.g.\ in \cite{Baumgarten} that for $\rho \geq 1$ the persistence probability tends to a constant so that one has $\lambda_\rho=1$ for any $\rho\geq 1$.\\
One may further ask whether a power series for the persistence exponent can be obtained if $\rho \le -1$. By \cite{Dembo}, the behaviour of the persistence probabilities is also exponential. It would be very interesting to find any further information about the persistence exponent there.
\end{remark}

As stated above, the kernel $m_\rho$ can be expressed as a power series in $\rho$. Theorem \ref{Largest Eigenvalue holomorphic} shows that the largest eigenvalue can also be expressed as a power series in $\rho$. In addition, the corresponding eigenfunction can be expressed as a power series in $\rho$ (Appendix \ref{appendix}, Theorem \ref{Thm: Eigenvalue, eigenvector holomorphic}). Hence, the eigenvalue equation
\[\lambda_\rho f_\rho = \int_0^\infty f_\rho(y)m_\rho(x,y) \di\gamma (y)\]
can be rewritten as
\[\sum_{n=0}^\infty \rho^n K_n \sum_{i=0}^\infty \rho^i g_i(x) = \int_0^\infty \sum_{i=0}^\infty \rho^i g_i(y) \sum_{n=0}^\infty \rho^n a_n(x,y) \di y\]
with $a_n(x,y)= \frac{1}{n!}h_n(x)h_n(y)$. Because in our case the quantity $a_n(x,y)$ has a ``nice'' product form, a comparison of the coefficients in $\rho$ and $x$ yields that $g_j(x)=\sum_{i=0}^j G_{i,j}h_i(x)$ for some constants $G_{i,j}\in\R$ and we get the following equations
\begin{equation}\label{eigenvalueequation}
\sum_{i=j}^k K_{k-i}G_{j,i} = \sum_{i=0}^{k-j}G_{i,k-j}\psi_{i,j},\quad 0\le j\le k,\quad k\in\N,
\end{equation}

where
\[\psi_{i,j} = \frac{1}{j!} \int_0^\infty h_i(y)h_j(y) \di\gamma(y) ,\quad i,j\ge 0.\]
With this iterative formula we are able to compute the coefficients $(K_n)_n$ explicitly. The first coefficients are given by
\begin{align*}
K_0 &=\frac{1}{2},\\
K_1 &= \frac{1}{\pi},\\
K_2 &= \frac{1}{\pi} - \frac{2}{\pi^2},\\
K_3 &= \frac{7}{6\pi} - \frac{6}{\pi^2} + \frac{8}{\pi^3},\\
K_4 &= \frac{1}{\pi} - \frac{35}{3 \pi^2} + \frac{40}{\pi^3} - \frac{40}{\pi^4},\\
K_5 &= \frac{43}{40\pi} - \frac{19}{\pi^2} + \frac{116}{\pi^3} - \frac{280}{\pi^4} + \frac{224}{\pi^5},\\
K_6 &= \frac{7}{6\pi} - \frac{5149}{180\pi^2} + \frac{790}{3\pi^3} - \frac{3260}{3\pi^4} + \frac{2016}{\pi^5} - \frac{1344}{\pi^6},\\
K_7 &= \frac{117}{112\pi} -\frac{799}{20\pi^2} + \frac{7762}{15\pi^3} -\frac{3164}{\pi^4} + \frac{29456}{3\pi^5} - \frac{14784}{\pi^6} + \frac{8448}{\pi^7},\\
K_8 &= \frac{1}{\pi} - \frac{8843}{168\pi^2} + \frac{16541}{18\pi^3} - \frac{23147}{3\pi^4} + \frac{34944}{\pi^5} - \frac{86688}{\pi^6} + \frac{109824}{\pi^7} - \frac{54912}{\pi^8}.
\end{align*}
Unfortunately, we could not find a helpful structure to obtain a closed-form expression for the $n$-th coefficient.

%%%%%%%%%%%%%%%%%%%%%%%%%%%%%%%%%%%%%%%%%%%%%%%%%%%%%%%%%%%%%%%%%%%%%%%%%%%%%%%%%%%%%%%%%%%%%%%%%%%%%%%%%%%%
%%%%%%%%%%%%%%%%%%%%%%%%%%%%%%%%%%%%%%%%%%%%%%%%%%%%%%%%%%%%%%%%%%%%%%%%%%%%%%%%%%%%%%%%%%%%%%%%%%%%%%%%%%%%
%%%%%%%%%%%%%%%%%%%%%%%%%%%%%%%%%%%%%%%%%%%%%%%%%%%%%%%%%%%%%%%%%%%%%%%%%%%%%%%%%%%%%%%%%%%%%%%%%%%%%%%%%%%%
\subsection{Related work and discussion}\label{Section related work}

The idea of relating the persistence exponent to an eigenvalue of an integral operator has already received great attention (see \cite{Baumgarten2,Aurzada,Wachtel,Majumdarbray}, also see \cite{Champagnat,Collet,Meleard,Tweedie2,Tweedie1} for the quasi-stationary approach). Among these, \cite{Wachtel} is a very recent work, which uses functional analytical methods, such as the Fredholm alternative to obtain a precise asymptotic behaviour of the persistence probabilities. However, the persistence exponent is given there only implicitly. For a completely different approach to persistence probabilities of autoregressive processes via their generating polynomials we refer the reader to \cite{Dembo}. Nevertheless, quantitative statements about the persistence exponent are known only in a few examples (see \cite{Aurzada}).\\
We believe that our techniques and results have a greater generality. In particular, we conjecture that Theorem \ref{PE=Largest Eigenvalue}, Theorem \ref{Largest Eigenvalue holomorphic}, and equations of type \eqref{eigenvalueequation} are valid more generally, e.g.\ for other innovation distributions and also for moving average processes (see e.g. \cite{Aurzada,Majumdardhar}). For this, suitable transformations of the integral operator $P^+$ must be found. Another possible generalization is to use these methods for two-sided exit problems.\\
 
\begin{remark}
Firstly, note that $(X_0,X_1,X_2,\ldots)^T=S^{-1} (X_0,\xi_1,\xi_2,\ldots)^T$, where \[
S=\begin{pmatrix}
1 & 0 & 0 & \ldots & 0 \\
-\rho & 1 & 0 & \ldots  & 0\\
0 & -\rho & 1 &   &0\\
\vdots & & \ddots & \ddots & \vdots\\
%\vdots & &  & \ddots & \ddots& \vdots \\
0 & \ldots &  0 & -\rho & 1
\end{pmatrix}.
\]

Secondly, observe that since the innovation vector $(X_0,\xi_1,\xi_2,\ldots)$ is i.i.d.\ Gaussian, it is in particular isotropic, so that $U:=\frac{(X_0,\xi_1,\ldots,\xi_N)}{||(X_0,\xi_1,\ldots,\xi_N)||}$ is uniformly distributed on the unit sphere of $\mathbb R^{N+1}$. Therefore, the persistence probability,
$$
\PP(X_0\ge 0,\dots,X_N\ge 0)=\PP(S^{-1} (X_0,\xi_1,\ldots,\xi_N)\in\mathbb R_{\ge 0}^{N+1}) = \PP(U \in S\mathbb R_{\ge 0}^{N+1}),
$$
is the same as the normalized area of the intersection of the unit sphere of $\mathbb R^{N+1}$ with the cone $S\mathbb R_{\ge 0}^{N+1}$.

Our results thus carry over to any isotropic vector $(X_0,\xi_1,\xi_2,\ldots)$. However, this vector generates an AR(1)-process (i.e.\ the innovations are i.i.d.) if and only if the innovations are Gaussian \cite{Letac}.
\end{remark}

%%%%%%%%%%%%%%%%%%%%%%%%%%%%%%%%%%%%%%%%%%%%%%%%%%%%%%%%%%%%%%%%%%%%%%%%%%%%%%%%%%%%%%%%%%%%%%%%%%%%%%%%%%%%
%%%%%%%%%%%%%%%%%%%%%%%%%%%%%%%%%%%%%%%%%%%%%%%%%%%%%%%%%%%%%%%%%%%%%%%%%%%%%%%%%%%%%%%%%%%%%%%%%%%%%%%%%%%%
%%%%%%%%%%%%%%%%%%%%%%%%%%%%%%%%%%%%%%%%%%%%%%%%%%%%%%%%%%%%%%%%%%%%%%%%%%%%%%%%%%%%%%%%%%%%%%%%%%%%%%%%%%%%

\section{Proofs}\label{Section Proofs}
\subsection{\texorpdfstring{Proofs of the results of Section \ref{Section Perron-Frobenius}}{Proofs of the results of Section 2.1}}

We begin by proving the properties of the operator $M_\rho$. Since $m_\rho(x,y) = \frac{1}{\sqrt{1-\rho^2}}e^{-\frac{\rho^2x^2+\rho^2y^2-2\rho xy}{2(1-\rho^2)}}$, we obtain

\begin{align*}
&\int_0^\infty \int_0^\infty m_\rho(x,y)^2 \di\gamma(y)\di\gamma(x)\\
&= \frac{1}{1-\rho^2} \int_0^\infty \int_0^\infty e^{\frac{-\rho^2y^2 - \rho^2x^2 +2\rho xy}{1-\rho^2}} \di\gamma(y)\di\gamma(x)\\
&= \frac{1}{2\pi(1-\rho^2)} \int_0^\infty e^{-\frac{1+\rho^2}{2(1-\rho^2)}x^2} \int_0^\infty e^{-\frac{1+\rho^2}{2(1-\rho^2)}y^2}e^{\frac{2\rho xy}{1-\rho^2}}\di y \di x\\
&=  \frac{1}{2\pi(1-\rho^2)} \int_0^\infty e^{-\frac{1+\rho^2}{2(1-\rho^2)}x^2} \frac{\sqrt{\pi}\sqrt{2(1-\rho^2)}}{2 \sqrt{(1+\rho^2)}}\\
&\qquad\qquad\qquad  \cdot\Erfc\left(\frac{-\sqrt{2}\rho x}{\sqrt{(1-\rho^2)(1+\rho^2)}}\right) e^{\frac{2\rho^2x^2}{(1-\rho^2)(1+\rho^2)}}\di x\\
&\le C \cdot \int_0^\infty e^{\frac{-(1+\rho^2)^2x^2+4\rho^2x^2}{2(1-\rho^2)(1+\rho^2)}} \di x\\
&= C \cdot \int_0^\infty e^{-\frac{(1-\rho^2)}{2(1+\rho^2)}x^2} \di x\\
&<\infty.
\end{align*}

So $m_\rho(\cdot,\cdot)\in L^2(\gamma\otimes\gamma)$ and hence, the operator $M_\rho$ is a Hilbert-Schmidt integral operator \cite[Chapter IV, Proposition 6.5]{Schaefer} and thus well-defined and compact. In addition, the operator is obviously self-adjoint.
\medskip

In preparation for the proof of Theorem \ref{PE=Largest Eigenvalue}, we begin by relating the integral operator $M_\rho$ to the persistence problem of the AR(1)-process.

Let $\tilde{X}_0:= c\cdot X_0$ and $\tilde{\xi}_i:= c\cdot\xi_i$, i.e. $\tilde{\xi}_i\sim\mathcal{N}(0,c^2)$ for a constant $c>0$. Then trivially we have $\PP(X_0\ge 0,\dots,X_N\ge 0) = \PP(\tilde{X}_0\ge 0,\dots,\tilde{X}_N\ge 0)$ for all $N\in\N$. The transition operator $\tilde{P}$ of the Markov chain $(\tilde{X}_n)_n$ is given by the kernel
\[\tilde{p}(x,\mbox{d} y) = \frac{1}{\sqrt{2\pi}\sqrt{c^2}}e^{-\frac{y^2+\rho^2x^2-2\rho xy}{2c^2}}\di y.\]
Write $\tilde{P}^+\colon L^\infty([0,\infty))\to L^\infty([0,\infty)),\ \tilde{P}^+(f)(x)= \tilde{P}(f\1_{[0,\infty)})(x)$  as in the introduction.
For $c=\sqrt{1-\rho^2}$ we obtain
\[\tilde{p}(x,\mbox{d} y) = \frac{1}{\sqrt{2\pi}\sqrt{1-\rho^2}}e^{-\frac{\rho^2x^2+\rho^2y^2-2\rho xy}{2(1-\rho^2)}}e^{-\frac{y^2}{2}}\di y = m_\rho(x,y)\di\gamma(y).\]
This and the fact that $L^2(\gamma)$ contains all bounded functions yield $\tilde{P}^+\1=M_\rho \1$ and hence by equation \eqref{eq1}, 
\[\PP(X_0\ge 0,\dots,X_N\ge 0) = \int_0^\infty M_\rho^N(\1)\di\tilde{\mu},\]
where $\tilde{\mu}=\mathcal{N}(0,1-\rho^2)$.
Note that we related the persistence problem to an $L^2$-operator, contrary to relation \eqref{eq1}. This has two advantages: $M_\rho$ is compact and allows to use perturbation theory, contrary to $P^+$.

The proof of Theorem \ref{PE=Largest Eigenvalue} is based on a Perron-Frobenius statement for integral operators \cite{Jentzsch}. We use the following version of this theorem as stated in 
\cite[Chapter V, Theorem 6.6]{Schaefer}.

\begin{proposition}\label{Jentzsch 1912}
Let $E=L^p(\Omega,\mathcal{F},\nu)$, where $(\Omega,\mathcal{F},\nu)$ is a $\sigma$-finite measure space and $1\le p\le \infty$. Suppose $T\colon E\to E$ is a bounded integral operator given by a measurable kernel $k\ge 0$ satisfying the following two assumptions:
\begin{enumerate}
\item[(a)] some power of $T$ is compact,
\item[(b)] for all $B\in\mathcal{F}$ such that $\nu(B)>0$ and $\nu(B^C)>0$ it holds that
\[\int_{B^C} \int_B k(x,y)\di\nu (x) \di\nu (y) > 0.\]
\end{enumerate}
Then $\lambda(T)$, the spectral radius of $T$, is an eigenvalue of $T$ with a unique normalized, positive eigenfunction $f$, i.e.\ $\|f\|_{L^p(\nu)}=1$ and $f>0$ $\nu$-a.e.\ and any eigenfunction $\tilde{f}$ with these two properties coincides with $f$ $\nu$-a.e.
\end{proposition}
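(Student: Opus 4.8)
The plan is to derive Proposition~\ref{Jentzsch 1912} from three classical ingredients: that the spectral radius of a positive operator on a Banach lattice lies in its spectrum; Riesz--Schauder theory applied to the compact power $T^m$; and the irreducibility hypothesis~(b), which is precisely what turns ``nonnegative'' into ``$\nu$-a.e.\ strictly positive'' and ``isolated eigenvalue'' into ``geometrically simple''. Write $\lambda:=\lambda(T)$ for the spectral radius. First I would locate $\lambda$ in $\sigma(T)$: since $T\ge 0$, every $T^n$ is positive, so the Neumann series $R(z,T)=\sum_{n\ge 0}z^{-n-1}T^n$ (convergent for $|z|>\lambda$) has nonnegative coefficients, and a Pringsheim-type argument --- testing against positive vectors and positive functionals --- shows that the boundary point $z=\lambda$ is a singularity of the resolvent, i.e.\ $\lambda\in\sigma(T)$. (That $\lambda>0$ is part of the classical Perron--Frobenius--Jentzsch theory; in our application $M_\rho$ is a nonzero self-adjoint operator, so $\lambda(M_\rho)=\|M_\rho\|>0$ trivially.) Then, because $\mu\in\sigma(T)$ implies $\mu^m\in\sigma(T^m)$ and $T^m$ is compact, $\sigma(T)\setminus\{0\}$ is a discrete set; hence $\lambda$ is isolated in $\sigma(T)$, its Riesz projection $Q$ has finite-dimensional range (the compact operator $T^m$ restricted to $\operatorname{ran}Q$ has spectrum $\{\lambda^m\}$, so that range is finite-dimensional), and therefore $\lambda$ is an eigenvalue of $T$ and a pole of $R(\cdot,T)$, of some order $p\ge 1$.

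Next I would produce a nonnegative eigenfunction and then make it strictly positive. For real $z>\lambda$ the operator $R(z,T)=\sum_{n\ge 0}z^{-n-1}T^n$ is positive, hence so is the limit $\lim_{z\downarrow\lambda}(z-\lambda)^pR(z,T)=N^{p-1}Q$, where $N:=(T-\lambda)Q$; this limit operator is nonzero (the pole has order $p$) and maps into $\ker(T-\lambda)$, so applying it to a suitable $u\ge 0$ yields $f_0\ge 0$, $f_0\neq 0$, with $Tf_0=\lambda f_0$. Now hypothesis~(b) enters: if $B:=\{f_0=0\}$ had $\nu(B)>0$, then $0=\lambda f_0(x)=\int k(x,y)f_0(y)\di\nu(y)$ for $\nu$-a.e.\ $x\in B$, which together with $k,f_0\ge 0$ gives $k(x,y)=0$ for $\nu$-a.e.\ $x\in B$, $\nu$-a.e.\ $y\in B^C$; if also $\nu(B^C)>0$ this says $\int_{B^C}\int_B k(x,y)\di\nu(x)\di\nu(y)=0$, contradicting~(b), and $\nu(B^C)=0$ is impossible since $f_0\neq 0$. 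Hence $f_0>0$ $\nu$-a.e.

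For simplicity and uniqueness I would note that~(b) is invariant under $B\leftrightarrow B^C$, so the adjoint $T^*$ (an integral operator with kernel $k(y,x)$) satisfies (a) and (b) as well; applying the two preceding steps to $T^*$ produces a $\nu$-a.e.\ strictly positive eigenfunction $f_0^*$ of $T^*$ at $\lambda=\lambda(T^*)$, which lies in $\operatorname{ran}Q^*$ and hence satisfies $N^*f_0^*=0$. If $p\ge 2$ then $\langle f_0,f_0^*\rangle=\langle N^{p-1}Qu,f_0^*\rangle=\langle Qu,(N^*)^{p-1}f_0^*\rangle=0$, contradicting $\int f_0f_0^*\di\nu>0$; thus $p=1$ and $V:=\ker(T-\lambda)=\operatorname{ran}Q$ is finite-dimensional. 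For any real $g\in V$ one has $Tg_\pm\ge\lambda g_\pm$ pointwise, so pairing with $f_0^*>0$ forces $Tg_\pm=\lambda g_\pm$; since two $\nu$-a.e.\ strictly positive functions cannot have disjoint supports, $g$ keeps one strict sign, hence (by the positivity argument above, applied to $g$ or $-g$) is $\nu$-a.e.\ strictly positive or strictly negative. Thus $V$ equals the union of the cone $V_+:=\{v\in V:v\ge 0\ \nu\text{-a.e.}\}$ and $-V_+$, with every nonzero element of $V_+$ strictly positive a.e.; examining the family $\{f_0-sg:s\in\R\}$ --- first ruling out $\operatorname{ess\,sup}(f_0/g)=\infty$ and $\operatorname{ess\,inf}(f_0/g)=0$, then choosing $s$ strictly between $\operatorname{ess\,inf}(f_0/g)$ and $\operatorname{ess\,sup}(f_0/g)$ --- forces $\dim V=1$. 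Normalizing $f_0$ yields the unique $f$ with $\|f\|_{L^p(\nu)}=1$ and $f>0$ a.e., and any eigenfunction $\tilde f$ with $\|\tilde f\|_{L^p(\nu)}=1$ and $\tilde f>0$ a.e.\ necessarily has eigenvalue $\lambda$ (pair it with $f_0^*$), hence coincides with $f$.

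The routine step is locating $\lambda$ in the spectrum. The two delicate points I expect are: (i) extracting a nonnegative eigenfunction from the mere positivity of the resolvent --- the Laurent-coefficient computation --- which genuinely needs that $\lambda$ is a pole and not merely a spectral value; and, above all, (ii) the last paragraph, i.e.\ proving $p=1$ (algebraic multiplicity equals geometric) and ruling out $\dim V\ge 2$, where the finite-dimensionality of $V$ and the $B\leftrightarrow B^C$ symmetry of~(b) are both used essentially. For the self-adjoint operator $M_\rho$ of Theorem~\ref{PE=Largest Eigenvalue} one has $T=T^*$ and $p=1$ for free, so there the substantive content of this machinery is just the strict positivity of $f_0$ via~(b) and the one-dimensionality of the top eigenspace.
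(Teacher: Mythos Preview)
The paper does not supply a proof of this proposition: it is quoted as \cite[Chapter~V, Theorem~6.6]{Schaefer} and used as a black box. There is therefore no ``paper's own proof'' to compare against; what you have written is a faithful reconstruction of the classical Perron--Frobenius/Krein--Rutman argument as it appears in Schaefer: positivity of the Neumann series plus a Pringsheim argument to place $\lambda$ in $\sigma(T)$; Riesz--Schauder for the compact power $T^m$ to make $\lambda$ an isolated pole; extraction of a nonnegative eigenvector from the leading Laurent coefficient of the resolvent; the irreducibility hypothesis~(b) to upgrade nonnegativity to strict positivity; and the dual eigenvector $f_0^*$ to force first $p=1$ and then $\dim V=1$.

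The outline is correct. Two places where a complete write-up would need more care: (i) the case $p=\infty$, where $(L^\infty)^*\neq L^1$ and your adjoint argument for $T^*$ has to be rerouted through the preadjoint on $L^1$; (ii) the strict inequality $\lambda(T)>0$, which you defer to ``classical theory'' --- in Schaefer's treatment this is deduced from irreducibility together with compactness of a power and is not entirely free. Your closing observation that for the self-adjoint Hilbert--Schmidt operator $M_\rho$ most of the machinery collapses (one has $\lambda=\|M_\rho\|>0$ and algebraic simplicity automatically, so only strict positivity via~(b) and geometric simplicity remain) is accurate and matches exactly how the paper uses the proposition in the proof of Theorem~\ref{PE=Largest Eigenvalue}.
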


To obtain a relation between the persistence exponent and the largest eigenvalue of some integral operator, we will use the following lemma which may be of interest when generalizing our results to other situations.

\begin{lemma}\label{Lemma Perron-Frobenius}
Under the hypotheses of the above proposition, if moreover $\1\in E$, the eigenfunction $f$ is bounded and $\mu$ is a finite measure with $\di\mu=g\di\nu$, $g\in L^q(\Omega,\mathcal{F},\nu)$ with $\frac{1}{p}+\frac{1}{q}=1$, then
\[\int_\Omega T^N(\1)(x) \di\mu(x) =\lambda(T)^{N+o(N)}.\] 
Additionally, if $p=2$ and the operator $T$ is normal, i.e.\ $TT^\ast = T^\ast T$, where by $T^\ast$ the adjoint operator is denoted, we get
\[c\lambda(T)^N \le \int_\Omega T^N(\1)(x) \di\mu(x) \le C\lambda(T)^N \]
for some constants $c,C>0$.
\end{lemma}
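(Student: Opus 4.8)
The plan is to exploit the Perron--Frobenius structure provided by Proposition~\ref{Jentzsch 1912}. Write $\lambda:=\lambda(T)$ for the spectral radius and let $f$ be the normalized positive eigenfunction. First I would establish the lower bound $\int_\Omega T^N(\1)\di\mu \ge c\,\lambda^N$, which requires no normality. Since $f$ is bounded (by hypothesis) and strictly positive $\nu$-a.e., one has $\1 \ge \varepsilon f$ pointwise $\nu$-a.e.\ for some $\varepsilon>0$ on any set of finite measure where $f$ is bounded away from $0$; more carefully, because $\1\in E$ and $f\in E$ with $f>0$, the function $\1/\|f\|_\infty$ dominates $\varepsilon f$ for $\varepsilon$ small enough, using boundedness of $f$. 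Since $T$ has a nonnegative kernel, it is positivity-preserving and monotone, so $T^N(\1) \ge \varepsilon T^N(f) = \varepsilon \lambda^N f$, whence $\int_\Omega T^N(\1)\di\mu \ge \varepsilon\lambda^N \int_\Omega f\di\mu$, and $\int f\di\mu>0$ because $f>0$ $\nu$-a.e.\ and $\mu$ is a nonzero finite measure (here $\di\mu = g\di\nu$ with $g\ge0$, and one needs $g$ not $\nu$-a.e.\ zero, i.e.\ $\mu\ne 0$; I'd note this is implicit). This gives the lower bound with $c=\varepsilon\int f\di\mu$.

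Next, the upper bound of the rough statement $\int_\Omega T^N(\1)\di\mu = \lambda^{N+o(N)}$. By H\"older's inequality with $\tfrac1p+\tfrac1q=1$,
\[
\int_\Omega T^N(\1)(x)\di\mu(x) = \int_\Omega T^N(\1)(x) g(x)\di\nu(x) \le \|T^N(\1)\|_{L^p(\nu)}\,\|g\|_{L^q(\nu)}.
\]
Since $\1\in E=L^p(\nu)$, we have $\|T^N(\1)\|_{L^p} \le \|T^N\|\,\|\1\|_{L^p} = \|T\|^N\,\|\1\|_{L^p}$, which already gives an exponential upper bound but with the wrong base $\|T\|$ rather than $\lambda$. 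To sharpen it to $\lambda^{N+o(N)}$, I would use the spectral radius formula $\lim_{n\to\infty}\|T^n\|^{1/n} = \lambda$ (valid for the spectral radius of a bounded operator on a Banach space), so that for every $\delta>0$ there is $C_\delta$ with $\|T^n\| \le C_\delta(\lambda+\delta)^n$; combined with H\"older this yields $\int T^N(\1)\di\mu \le C_\delta'(\lambda+\delta)^N$, and together with the lower bound $\ge c\lambda^N$ we conclude $\int T^N(\1)\di\mu = \lambda^{N+o(N)}$.

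Finally, the refined two-sided bound when $p=2$ and $T$ is normal. Here the spectral theorem for (bounded) normal operators on a Hilbert space gives a projection-valued measure $E(\cdot)$ on the spectrum $\sigma(T)\subseteq \overline{B(0,\lambda)}$, and since by Perron--Frobenius $\lambda$ is an eigenvalue, the spectral radius is attained; writing $Q$ for the orthogonal projection onto $\ker(T-\lambda)$ (one-dimensional, spanned by $f$, by the uniqueness in Proposition~\ref{Jentzsch 1912}), the functional calculus gives $\|T^N h\|_{L^2} \le \lambda^N \|h\|_{L^2}$ for all $h$, hence by H\"older (Cauchy--Schwarz) $\int T^N(\1)\di\mu \le \lambda^N \|\1\|_{L^2}\|g\|_{L^2}$, which is the desired $C\lambda^N$ with $C=\|\1\|_{L^2}\|g\|_{L^2}$. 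The lower bound is the one already proved. I expect the main obstacle to be the rough upper bound in the non-normal case: one must argue that the relevant $L^p$-norms grow no faster than $\lambda^{N}$ up to subexponential factors, which is exactly Gelfand's spectral radius formula, and one should double-check that the kernel being genuinely nonnegative (not requiring strict positivity everywhere) suffices for the monotonicity arguments in the lower bound and that $\mu\ne 0$ is being used. The normal case is comparatively clean once one invokes that $\lambda$ is an attained eigenvalue so $\|T\|=\lambda$.
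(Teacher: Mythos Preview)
Your proposal is correct and follows essentially the same route as the paper: the lower bound via $\1 \ge f/\|f\|_\infty$ and positivity of the kernel, the rough upper bound via H\"older plus Gelfand's formula $\|T^N\|^{1/N}\to\lambda(T)$, and the sharp upper bound in the normal case via $\|T^N\|=\lambda(T)^N$. The only cosmetic difference is that the paper invokes the spectral mapping theorem to get $\|T^N\|=\lambda(T)^N$ directly, whereas you detour through projection-valued measures; your lower-bound discussion is also slightly roundabout (the clean statement is simply $\1 \ge f/\|f\|_\infty$, which you eventually reach).
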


\begin{proof}[Proof of Lemma \ref{Lemma Perron-Frobenius}]
\textit{Upper bound:} We define a functional by $\varphi_\mu\colon L^p(\nu)\to\R, f\mapsto \int_\Omega f(x)\di\mu(x) =\int_\Omega f(x)g(x)\di\nu(x)$. It holds that $\|\varphi_\mu\| = \|g\|_{L^q(\nu)}$. We obtain
\begin{align*}
\int_\Omega T^N(\1)(x)\di\mu(x) = \varphi_\mu(T^N(\1)) &\le \|\varphi_\mu\|\cdot \|T^N(\1)\|_{L^p(\nu)}\\
& \le \|g\|_{L^q(\nu)}\cdot \|T^N\|\cdot \|\1\|_{L^p(\nu)}\\
&= \lambda(T)^{N+o(N)},
\end{align*} 
since $\1\in L^p(\nu), g\in L^q(\nu)$ by assumption and $\lambda(T)=\lim_{N\to\infty} \|T^N\|^{\frac{1}{N}}$.
If $p=2$ and $T$ is normal, then we have $\|T^N\|=\lambda(T^N) = \lambda(T)^N$ due to the spectral mapping theorem (see e.g.\ \cite[Chapter VIII, Theorem 2.7]{Conway}) and thus $\int_\Omega T^N(\1)\di\mu(x)\le C\lambda(T)^N$.\\

\textit{Lower bound:} Since the eigenfunction $f$ is bounded by assumption, we have $\1(x)\ge \frac{f(x)}{\|f\|_{\infty}}$. Hence,
\begin{align*}
\int_\Omega T^N(\1)(x)\di\mu(x) &\ge \int_\Omega T^N\left(\frac{f(x)}{\|f\|_{\infty}}\right)\di\mu(x)\\
&= \lambda(T)^N \int_\Omega \frac{f(x)}{\|f\|_{\infty}}\di\mu(x)\\
&= c\cdot \lambda(T)^N,
\end{align*}
where the first inequality is due to the non-negativity of the kernel $k$.
\end{proof}

\begin{proof}[Proof of Theorem \ref{PE=Largest Eigenvalue}]
The assertion of the theorem is a consequence of Lemma \ref{Lemma Perron-Frobenius} applied to $M_\rho$. Therefore, we need to check that we are in the setting of Lemma \ref{Lemma Perron-Frobenius}.\\
The operator $M_\rho$ is compact and, since $m_\rho(x,y)>0$ for all $x,y$, condition (b) of the above proposition is satisfied. So the spectral radius $\lambda(M_\rho)$ is an eigenvalue of $M_\rho$ with a unique normalized, positive eigenfunction. To obtain the boundedness of this eigenfunction, note that we get an eigenfunction $f\in L^\infty([0,\infty))$ of the operator $\tilde{P}^+$ which is positive with corresponding eigenvalue $\lambda_\rho$ due to \cite[Theorem 2.1 \& Theorem 2.3]{Aurzada}. (In \cite{Aurzada} only a non-negative eigenfunction is obtained, but since in our case the operator $\tilde{P}^+$ is irreducible, an application of the above proposition yields that the eigenfunction is actually positive.) Since $\tilde{P}^+ g=M_\rho g$ for bounded $g$, the positive function $f$ is an eigenfunction of $M_\rho$. Therefore, the corresponding eigenvalue $\lambda_\rho$ is the largest one, i.e. $\lambda_\rho = \lambda(M_\rho)$. To summarize, $\lambda_\rho$ is the largest eigenvalue of $M_\rho$ with a positive and bounded eigenfunction.\\
In addition, we have
\[g(x):= \frac{\di\tilde{\mu}}{\di\gamma}(x) = \frac{1}{\sqrt{1-\rho^2}} e^{-\frac{\rho^2 x^2}{2(1-\rho^2)}}\]

and clearly $g\in L^2(\gamma)$.
Therefore, Lemma \ref{Lemma Perron-Frobenius} yields
\[c_\rho\cdot\lambda_\rho^N \le \PP(X_0\ge 0,\dots , X_N\ge 0) \le C_\rho\cdot\lambda_\rho^N,\]
where $\lambda_\rho$ is the largest eigenvalue of the self-adjoint Hilbert-Schmidt integral operator $M_\rho$.
\end{proof}

%%%%%%%%%%%%%%%%%%%%%%%%%%%%%%%%%%%%%%%%%%%%%%%%%%%%%%%%%%%%%%%%%%%%%%%%%%%%%%%%%%%%%%%%%%%%%%%%%%%%%%%%%%%%%%%%%%%%%%%%%%%%%%%%%%%%%%%%%%%%%%%%%%%%%%%%%%%%%%%%%%%%%%%%%%%%%%%%%%%%%%%%%%%%%%%%%%%%%%%%%%%%%%%%%%%%%%%%%%%%%%%%%%%%%%%%%%%%%%%%%%%%%%%%%%%%%%%%%%%%%%%%%%%%%%%%%%%%%%%%%%%%%%%%%%%%%%%%%%%%%%%%%%%%%%%%%%%%%%%%%%%%%%%%%%%%%%%%%%%%%%%%%%%%%%%%%%%%%%%%%%%%%%%%%%%%%%%%%%%%%%%%%%%%%%%%%%%%%%%%%%%%%%%%%%%%%%%%%%%%%%%%%%%%%%%%

\subsection{\texorpdfstring{Proofs of the results of Section \ref{Section Peturbation AR Gaussian}}{Proofs of the results of Section 2.2}}

The proof of Theorem \ref{Largest Eigenvalue holomorphic} is based on methods from perturbation theory. A brief introduction including relevant definitions and theorems can be found in Appendix \ref{appendix}.

\begin{proof}[Proof of Theorem \ref{Largest Eigenvalue holomorphic}]
First, we want to show that $M_\rho$ is holomorphic. Let us consider the operator norm of the integral operator $M^{(n)}$ on $L^2([0,\infty),\gamma)$ with kernel $a_n(x,y):=\frac{1}{n!}h_n(x)h_n(y)$. 
We compute 
\begin{align*}
& \int_0^\infty \int_0^\infty a_n(x,y)^2 \di\gamma(x) \di\gamma(y)\\
&= \int_0^\infty \int_0^\infty \left(\frac{1}{n!} h_n(x) h_n(y)\right)^2 \di\gamma(x) \di\gamma(y)\\
&= \int_0^\infty \frac{1}{n!}h_n(x)^2 \di\gamma(x) \cdot \int_0^\infty \frac{1}{n!}h_n(y)^2 \di\gamma(y)\\
&= \frac{1}{4}.
\end{align*}
This implies 

\begin{equation}\label{eq: inequality operator norm M(n)}
\|M^{(n)}\| \le \frac{1}{2}.
\end{equation}
 
Accordingly, $\tilde{M_\rho}:=\sum_{n\in\N}\rho^nM^{(n)}$ exists on the disc $|\rho|<1$. The holomorphicity of $M_\rho$ will be proved, if we show that $M_\rho=\tilde{M_\rho}$. Let $B:=\{h_n(x)\1_{[0,N]}(x)\colon n,N\in\N\}$. $B$ is a fundamental subset since $(h_n)_{n\in\N}$ is a fundamental subset of $L^2([0,\infty),\gamma)$ (see e.g. \cite[Chapter 2]{Janson}) and $h_n\in\overline{B}$ for every $n\in\N$. Hence it is sufficient to show $M_\rho f=\tilde{M_\rho} f$ for all $f\in B$. 
For $f\in B$ and $x\in[0,\infty)$ we get
\begin{align}
 M_\rho f(x)&=  \int_0^\infty m_\rho(x,y) f(y) \di\gamma(y) \nonumber \\
&= \int_0^\infty \sum_{n=0}^\infty \rho^n a_n(x,y) f(y) \di\gamma(y) \label{eq: M_rho}
\end{align}

By \cite[Equation (4.15)]{Janson}, we have
\[\sum_{n=0}^\infty \left|\rho^n a_n(x,y)\right|\le \EE\left[e^{|\rho|(x+|\eta|)(y+|\zeta|)} \right] =: C(x,y)\] 
where $\eta,\zeta$ are i.i.d.\ random variables with standard normal distribution. Let us denote $f(x)= h_{k}(x)\1_{[0,N](x)}$.
We can exchange sum and integral in \eqref{eq: M_rho} since
\begin{align*}
 &\int_0^\infty \sum_{n=0}^\infty \left|\rho^n a_n(x,y) f(y) \right|\di\gamma(y) \\
&\le  \int_0^\infty C(x,y) \left| f(y) \right|\di\gamma(y) \\
&= \int_0^{N} |h_{k}(y)|C(x,y) \di\gamma(y)\\
&\le C(x,N)\cdot  \int_0^{N} |h_{k}(y)| \di\gamma(y) \\
&< \infty.
\end{align*}

So, we obtain
\[M_\rho f =  \sum_{n=0}^\infty \rho^n \int_0^\infty a_n(x,y) f(y) \di\gamma(y) = \tilde{M_\rho}f,\]
showing that $M_\rho$ is holomorphic.
Furthermore, $\lambda_0$ is an eigenvalue with algebraic multiplicity equal to $1$ \cite[Chapter V, Theorem 5.2]{Schaefer}. 
The assertion of the theorem now follows from Appendix \ref{appendix} Theorem \ref{Thm: Eigenvalue, eigenvector holomorphic} and Corollary \ref{Cor: radius of convergence HR} by using equation \eqref{eq: inequality operator norm M(n)}.

\end{proof}

%%%%%%%%%%%%%%%%%%%%%%%%%%%%%%%%%%%%%%%%%%%%%%%%%%%%%%%%%%%%%%%%%%%%%%%%%%%%%%%%%%%%%%%%%%%%%%%%%%%%%%%%%%%%
%%%%%%%%%%%%%%%%%%%%%%%%%%%%%%%%%%%%%%%%%%%%%%%%%%%%%%%%%%%%%%%%%%%%%%%%%%%%%%%%%%%%%%%%%%%%%%%%%%%%%%%%%%%%
%%%%%%%%%%%%%%%%%%%%%%%%%%%%%%%%%%%%%%%%%%%%%%%%%%%%%%%%%%%%%%%%%%%%%%%%%%%%%%%%%%%%%%%%%%%%%%%%%%%%%%%%%%%%

\noindent\textbf{Acknowledgement.} We would like to thank two anonymous referees for their comments that helped to improve the exposition of this paper.\\
This work was supported by the Deutsche Forschungsgemeinschaft (grant AU370/4).

\begin{appendices}
\section{Collection of auxiliary results from perturbation theory}\label{appendix}
\subsection{Overview}
The aim of this appendix is to give the reader a simple and almost self-contained introduction to the relevant results of perturbation theory for this paper. The appendix is based on the classical work \cite{Kato}.\\
Let $X$ be a Banach space with norm $\|\cdot\|_{X}$. We write $\mathcal{L}(X)$ for the set of bounded linear operators on $X$. For $T\in\mathcal{L}(X)$ we will denote by $\|T\|$ the operator norm of $T$, i.e.\ $\|T\|:=\inf\{c\ge 0\colon \|Tx\|_{X}\le c\|x\|_{X}\mbox{ for all }x\in X \}$.
For a sequence $(T_n)_n$ of operators on $X$ and an operator $T$ on $X$, we write $\lim_{n\to\infty} T_n=T$ if $\lim_{n\to\infty}\|T_n-T\|= 0$. Moreover, let $D\subseteq \C$ be a domain.

\begin{definition}
The operator-valued function $\mathcal{T}\colon D\to \mathcal{L}(X)$ is called holomorphic if $\lim_{h\to 0}\frac{\mathcal{T}(t+h)-\mathcal{T}(t)}{h}$ exists for all $t\in D$. 
\end{definition}

Throughout the appendix, we will make use of properties of holomorphic operator-valued functions. Roughly speaking, the results of complex-valued functions can be generalized to operator-valued functions by considering scalar-valued functions defined via the dual space. For an overview of this topic we refer to \cite[Section 3.3, Section 10.1]{Baumgaertel} and \cite[Appendix A]{Arendt}.

A holomorphic operator-valued function on a disc can be expressed as a power series on this disc. Conversely, a convergent power series on a disc defines a holomorphic function.
To simplify notation we continue to write $T_t$ for $\mathcal{T}(t)$. The key to prove Theorem \ref{Largest Eigenvalue holomorphic} are the following results from perturbation theory.

\begin{theorem}\label{Thm: Eigenvalue, eigenvector holomorphic}
Assume that $\mathcal{T}\colon D\to\mathcal{L}(X)$ is holomorphic. Let $t_0\in D$ and $\lambda_0$ be an isolated eigenvalue of $T_{t_0}$ with algebraic multiplicity equal to one. Then there exists an open neighbourhood $U\subseteq D$ of $t_0$ and a holomorphic function $\lambda\colon U\to \C$ such that $\lambda_t$ is an eigenvalue of $T_t$ for $t\in U$.\\
In addition, there exists an open neighbourhood $U'\subseteq D$ of $t_0$ and a holomorphic function $g\colon U'\to X$ such that $g_t$ is an eigenvector of $T_t$ with eigenvalue $\lambda_t$ for $t\in U'\cap U$.
\end{theorem}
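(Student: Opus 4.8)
The plan is to prove Theorem~\ref{Thm: Eigenvalue, eigenvector holomorphic} by reducing the eigenvalue problem to a finite-dimensional (here, one-dimensional) situation via the Riesz spectral projection, following the standard reduction process in \cite{Kato}. First I would use that $\lambda_0$ is an isolated point of the spectrum $\sigma(T_{t_0})$ to choose a small circle $\Gamma\subseteq\C$ around $\lambda_0$ such that $\lambda_0$ is the only spectral point inside $\Gamma$ and $\Gamma\cap\sigma(T_{t_0})=\emptyset$. The key continuity input is that the resolvent $(\zeta - T_t)^{-1}$ depends jointly holomorphically on $(\zeta,t)$ on the set where it exists; since $\Gamma$ is compact and disjoint from $\sigma(T_{t_0})$, a Neumann-series / openness argument shows that for $t$ in some neighbourhood $U$ of $t_0$ the circle $\Gamma$ still avoids $\sigma(T_t)$, so the projection
\[
Q_t := -\frac{1}{2\pi i}\int_\Gamma (\zeta - T_t)^{-1}\di\zeta
\]
is well-defined, and $t\mapsto Q_t$ is holomorphic on $U$ (differentiate under the integral sign, using holomorphy of the resolvent).

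Next I would show that $\dim\operatorname{ran}Q_t$ is constant on a connected neighbourhood. Since $Q_t$ is a holomorphic family of projections with $Q_{t_0}$ of rank one (algebraic multiplicity of $\lambda_0$ equals one), the rank is locally constant: two projections with $\|Q_t - Q_{t_0}\|<1$ are similar via the transformation operator $U_t$ built from $Q_t$ and $Q_{t_0}$ (the explicit Sz.-Nagy formula), hence have the same rank. So after shrinking $U$, $\operatorname{ran}Q_t$ is one-dimensional for all $t\in U$, it is invariant under $T_t$ (because $Q_t$ commutes with $T_t$, which follows from the resolvent identity), and $\sigma(T_t|_{\operatorname{ran}Q_t})$ consists of exactly the spectral points of $T_t$ inside $\Gamma$. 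On a one-dimensional invariant subspace $T_t$ acts as multiplication by a scalar, and that scalar is
\[
\lambda_t = \operatorname{tr}\bigl(T_t Q_t\bigr),
\]
which is holomorphic in $t$ as a composition/product of holomorphic maps with the (bounded, linear, hence holomorphic-preserving) trace on the finite-rank operators $T_tQ_t$. This $\lambda_t$ is an eigenvalue of $T_t$, proving the first assertion.

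For the eigenvector, I would fix any $x_0\in X$ with $Q_{t_0}x_0 \neq 0$ and set $g_t := Q_t x_0$. Then $g_t \in \operatorname{ran}Q_t$, so $T_t g_t = \lambda_t g_t$, and $t\mapsto g_t$ is holomorphic because $t\mapsto Q_t$ is. It remains to ensure $g_t\neq 0$: since $g_{t_0}=Q_{t_0}x_0\neq 0$ and $t\mapsto g_t$ is continuous, $g_t\neq 0$ on a possibly smaller neighbourhood $U'$ of $t_0$; on $U'\cap U$ we then have a genuine eigenvector with eigenvalue $\lambda_t$. (If one insists on $U'$ being a full neighbourhood one can alternatively divide by a suitable component; but the statement only asks for $U'\cap U$.)

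The main obstacle — and the place where I would spend the most care — is establishing the joint holomorphy and local boundedness of the resolvent $(\zeta-T_t)^{-1}$ uniformly for $\zeta\in\Gamma$, which underlies both the well-definedness of $Q_t$ on a neighbourhood and the differentiation under the integral sign. This rests on: (i) for fixed $\zeta_0\notin\sigma(T_{t_0})$, the Neumann series $\sum_{k\ge0}\bigl((\zeta_0 - T_{t_0})^{-1}(T_t - T_{t_0})\bigr)^k (\zeta_0-T_{t_0})^{-1}$ converges for $\|T_t - T_{t_0}\|$ small, giving holomorphy of $t\mapsto(\zeta_0-T_t)^{-1}$ near $t_0$ (using that holomorphy of $\mathcal T$ implies local norm-continuity); (ii) a compactness argument over $\zeta\in\Gamma$ to get a single neighbourhood $U$ that works for all $\zeta\in\Gamma$ simultaneously, together with a uniform resolvent bound $\sup_{\zeta\in\Gamma,\,t\in U}\|(\zeta-T_t)^{-1}\|<\infty$. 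Everything else — the projection identities $Q_t^2=Q_t$, $Q_tT_t=T_tQ_t$, the rank-constancy via the transformation operator, and the trace formula for $\lambda_t$ — is then routine bookkeeping with the Dunford functional calculus, for which I would cite \cite[Chapter~VII, Sections 1.3 and 2]{Kato} rather than reproduce the computations.
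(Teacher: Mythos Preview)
Your proposal is correct and follows essentially the same route as the paper: the paper builds exactly this Riesz projection $P_t=-\frac{1}{2\pi i}\int_\Gamma R(t,\lambda)\di\lambda$, proves its holomorphy via the second Neumann series and rank constancy via \cite[Lemma I.4.10]{Kato} (which is precisely the Sz.-Nagy transformation-operator argument you sketch), then extracts $\lambda_t=\tr(T_tP_t)$ and $g_t=P_t g_0$. One cosmetic slip: with your resolvent convention $(\zeta-T_t)^{-1}$ the projection should be $+\frac{1}{2\pi i}\int_\Gamma(\zeta-T_t)^{-1}\di\zeta$; the extra minus sign you carried over from the paper belongs to its opposite convention $R(\lambda)=(T-\lambda)^{-1}$.
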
  

\begin{theorem}\label{Thm: radius of convergence BR}
Under the conditions of Theorem \ref{Thm: Eigenvalue, eigenvector holomorphic} with $T_t = \sum_{n\in\N}(t-t_0)^n T^{(n)}$ assume that $\|T^{(n)}\|\le ac^{n-1}$ for all $n\in\N$ with $a,c\ge 0$. The following lower bound for the radius of convergence $r$ of the power series of $\lambda_t$ at $t_0$ holds:
\[r\ge \min_{\lambda\in\Gamma}\frac{1}{a\|R_0(\lambda)\|+c},\]
where $R_0(\cdot)$ is the resolvent operator of $T_{t_0}$ and $\Gamma$ is an arbitrary closed curve that lies outside $\Sigma(T_{t_0})$ with positive direction which encloses $\lambda_0$, where $\Sigma(T_{t_0})$ is the spectrum of $T_{t_0}$.
\end{theorem}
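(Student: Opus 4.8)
The plan is to follow the classical Riesz-projection scheme of \cite{Kato}: control the resolvent of $T_t$ on a contour around $\lambda_0$ by a Neumann series, form the associated eigenprojection, and read off the perturbed eigenvalue as a trace. Fix a closed curve $\Gamma$ as in the statement, so that $\lambda_0$ is the only point of $\Sigma(T_{t_0})$ enclosed by $\Gamma$; such a $\Gamma$ exists since $\lambda_0$ is isolated. Writing $R_0(\lambda):=(T_{t_0}-\lambda\Id)^{-1}$ for $\lambda\notin\Sigma(T_{t_0})$, the map $\lambda\mapsto R_0(\lambda)$ is continuous on the compact set $\Gamma$, so that $\varrho:=\min_{\lambda\in\Gamma}(a\|R_0(\lambda)\|+c)^{-1}$ is well defined and strictly positive; it suffices to show that $\lambda_t$ extends holomorphically to the disc $|t-t_0|<\varrho$, since then its power series at $t_0$ has radius of convergence $r\ge\varrho$.

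First I would bound the perturbation. From $T_t-T_{t_0}=\sum_{n\ge 1}(t-t_0)^nT^{(n)}$ and $\|T^{(n)}\|\le ac^{n-1}$ one gets, for $|t-t_0|<1/c$,
\[
\|T_t-T_{t_0}\|\le a|t-t_0|\sum_{n\ge 0}\bigl(c|t-t_0|\bigr)^n=\frac{a|t-t_0|}{1-c|t-t_0|}.
\]
Next, for $\lambda\in\Gamma$ I would factor $T_t-\lambda\Id=(T_{t_0}-\lambda\Id)\bigl(\Id+R_0(\lambda)(T_t-T_{t_0})\bigr)$ and invert the second factor by its Neumann series whenever $\|R_0(\lambda)\|\,\|T_t-T_{t_0}\|<1$. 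By the previous estimate this holds as soon as $a|t-t_0|\,\|R_0(\lambda)\|<1-c|t-t_0|$, i.e.\ $|t-t_0|<(a\|R_0(\lambda)\|+c)^{-1}$; minimizing over $\lambda\in\Gamma$ shows that $(T_t-\lambda\Id)^{-1}$ exists for all $\lambda\in\Gamma$ whenever $|t-t_0|<\varrho$, and that then $(t,\lambda)\mapsto(T_t-\lambda\Id)^{-1}$ is jointly holomorphic, the Neumann series converging uniformly on compact subsets.

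With the resolvent available along $\Gamma$, I would set $P_t:=-\frac{1}{2\pi i}\oint_\Gamma(T_t-\lambda\Id)^{-1}\di\lambda$, which is holomorphic in $t$ on $|t-t_0|<\varrho$. We have $\dim\operatorname{ran}P_{t_0}=1$, this being the algebraic multiplicity of $\lambda_0$. Since $t\mapsto P_t$ is norm-continuous and $\|P_t-P_s\|<1$ forces $\operatorname{ran}P_t$ and $\operatorname{ran}P_s$ to have the same dimension, the rank of $P_t$ is locally constant, hence constant equal to $1$ on the connected disc $|t-t_0|<\varrho$. On its one-dimensional range $T_tP_t$ acts as multiplication by the eigenvalue of $T_t$ lying inside $\Gamma$, so that eigenvalue equals $\tr(T_tP_t)$; being the trace of a product of two holomorphic operator-valued functions, it is holomorphic on $|t-t_0|<\varrho$, and it coincides near $t_0$ with the branch $\lambda_t$ from Theorem \ref{Thm: Eigenvalue, eigenvector holomorphic}. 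Hence $\lambda_t$ is holomorphic on $|t-t_0|<\varrho$, and its radius of convergence at $t_0$ satisfies $r\ge\varrho$, as claimed.

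I expect the only genuinely delicate point to be the global rank-one statement for $P_t$ over the entire disc $|t-t_0|<\varrho$, rather than merely near $t_0$: this relies on the uniform-on-compacts convergence of the Neumann series, so that $t\mapsto P_t$ is norm-continuous throughout, together with the connectedness argument above. The geometric-series estimate for $\|T_t-T_{t_0}\|$, the resolvent factorization, and the trace representation of the simple eigenvalue are all routine.
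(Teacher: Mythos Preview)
Your proof is correct and follows essentially the same route as the paper: bound the perturbation by the geometric series $a|t-t_0|/(1-c|t-t_0|)$, invert $T_t-\lambda$ on $\Gamma$ by a Neumann series whenever $|t-t_0|<(a\|R_0(\lambda)\|+c)^{-1}$, and then read off $\lambda_t=\tr(T_tP_t)$ from the holomorphic Riesz projection. The paper's version is terser because it invokes the second Neumann series expansion and the dimension-stability result already set up earlier in the appendix, whereas you spell out the rank-constancy argument for $P_t$ explicitly.
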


\begin{corollary}\label{Cor: radius of convergence HR}
Under the assumptions of Theorem \ref{Thm: radius of convergence BR}, and if moreover $X$ is a Hilbert space and $T_{t_0}$ is normal, we get
\[r\ge \frac{1}{\frac{2a}{d}+c},\]
where $d:=\dist\left(\lambda_0,\Sigma(T_{t_0})\setminus\{\lambda_0\}\right)$.
\end{corollary}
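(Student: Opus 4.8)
The plan is to derive Corollary~\ref{Cor: radius of convergence HR} from Theorem~\ref{Thm: radius of convergence BR} by making a concrete, essentially optimal choice of the contour $\Gamma$ and exploiting the fact that for a normal operator on a Hilbert space the resolvent norm is controlled entirely by the distance to the spectrum. The input I would use is the standard consequence of the spectral theorem: if $T_{t_0}$ is normal on a Hilbert space, then for every $\lambda$ in the resolvent set one has $\|R_0(\lambda)\| = \dist(\lambda,\Sigma(T_{t_0}))^{-1}$ (apply the continuous functional calculus to $z\mapsto (z-\lambda)^{-1}$). Consequently, the bound of Theorem~\ref{Thm: radius of convergence BR} reads $\min_{\lambda\in\Gamma}\frac{1}{a\|R_0(\lambda)\|+c} = \bigl(a\max_{\lambda\in\Gamma}\dist(\lambda,\Sigma(T_{t_0}))^{-1}+c\bigr)^{-1}$, so it is advantageous to pick $\Gamma$ keeping $\dist(\,\cdot\,,\Sigma(T_{t_0}))$ as large as possible along the curve.

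Next I would fix the contour. Set $d:=\dist(\lambda_0,\Sigma(T_{t_0})\setminus\{\lambda_0\})$; since $\lambda_0$ is an isolated eigenvalue (this is part of the hypotheses inherited via Theorem~\ref{Thm: Eigenvalue, eigenvector holomorphic}), we have $d>0$, with the convention $d=\infty$ when $\Sigma(T_{t_0})=\{\lambda_0\}$. Assume first $d<\infty$ and let $\Gamma$ be the positively oriented circle of radius $d/2$ centred at $\lambda_0$. This $\Gamma$ encloses $\lambda_0$ and no other point of $\Sigma(T_{t_0})$, and it lies in the resolvent set: any $\lambda\in\Gamma$ satisfies $|\lambda-\lambda_0|=d/2>0$ and, by the triangle inequality, $\dist(\lambda,\Sigma(T_{t_0})\setminus\{\lambda_0\})\ge d-d/2=d/2>0$. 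Hence $\dist(\lambda,\Sigma(T_{t_0}))\ge d/2$ for all $\lambda\in\Gamma$, so that $\|R_0(\lambda)\|\le 2/d$ on $\Gamma$ by the identity above.

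Finally, plugging this into Theorem~\ref{Thm: radius of convergence BR} gives
\[
r \;\ge\; \min_{\lambda\in\Gamma}\frac{1}{a\|R_0(\lambda)\|+c} \;\ge\; \frac{1}{\tfrac{2a}{d}+c},
\]
which is the asserted bound. In the remaining case $\Sigma(T_{t_0})=\{\lambda_0\}$, one takes $\Gamma$ to be the circle of radius $\rho$ about $\lambda_0$, for which $\|R_0(\lambda)\|=1/\rho$ on $\Gamma$, and lets $\rho\to\infty$ to obtain $r\ge 1/c$, consistent with reading $\tfrac{2a}{d}$ as $0$.

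I do not expect a genuine obstacle here. The two points requiring care are the justification of $\|R_0(\lambda)\|=\dist(\lambda,\Sigma(T_{t_0}))^{-1}$ for normal operators (which is where the Hilbert-space and normality hypotheses enter, via the spectral theorem) and the verification that the circle of radius $d/2$ really avoids $\Sigma(T_{t_0})$ while surrounding $\lambda_0$; the degenerate case $\Sigma(T_{t_0})=\{\lambda_0\}$ is handled by the limiting-radius argument just described. One could additionally remark that the radius $d/2$ is optimal among circles centred at $\lambda_0$, since for radius $\rho\in(0,d)$ the distance to the spectrum on the circle is at least $\min(\rho,d-\rho)$, maximised at $\rho=d/2$.
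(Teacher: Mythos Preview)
Your proposal is correct and follows essentially the same route as the paper's proof: invoke the identity $\|R_0(\lambda)\|=\dist(\lambda,\Sigma(T_{t_0}))^{-1}$ for normal operators on a Hilbert space, choose $\Gamma$ to be the circle of radius $d/2$ about $\lambda_0$, and plug the resulting bound $\|R_0(\lambda)\|\le 2/d$ into Theorem~\ref{Thm: radius of convergence BR}. Your treatment is in fact slightly more careful than the paper's, since you explicitly verify that this circle lies in the resolvent set, handle the degenerate case $\Sigma(T_{t_0})=\{\lambda_0\}$, and note the optimality of the radius $d/2$.
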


%%%%%%%%%%%%%%%%%%%%%%%%%%%%%%%%%%%%%%%%%%%%%%%%%%%%%%%%%%%%%%%%%%%%%%%%%%%%%%%%%%%%%%%%%%%%%%%%%%%%%%%%%%%%%%%%%%%%%%%%%%%%%%%%%%%%%%%%%%%%%%%%%%%%%%%%%%%%%%%%%%%%%%%%%%%%%%%%%%%%%%%%%%%%%%%%%%%%%%%%%%%%%%%%%%%%%%%%%%%%%%%%%%%%%%%%%%%%%%%%%%%%%%%%%%%%%%%%%%%%%%%%%%%%%%%%%%%%%%%%%%%%%%%%%%%%%%%%%%%%%%%%%%%%%%%%%

\subsection{Preliminaries}

\begin{lemma}\label{Lemma: Neumann series}
Let $T\in\mathcal{L}(X)$ with $\|T\|<1$. Then the Neumann series $\sum_{n=0}^\infty T^n$ converges in the operator norm and we have
\[(\Id-T)^{-1} = \sum_{n=0}^\infty T^n.\]
\end{lemma}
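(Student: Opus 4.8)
The plan is to use the standard two-step argument: first establish that the series converges in $\mathcal{L}(X)$ by showing it converges absolutely, and then identify the limit as the inverse of $\Id-T$ via a telescoping computation.

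First I would observe that the operator norm is submultiplicative, so $\|T^n\|\le\|T\|^n$ for every $n\in\N$. Since $\|T\|<1$, the scalar geometric series $\sum_{n=0}^\infty\|T\|^n$ converges (to $(1-\|T\|)^{-1}$), hence $\sum_{n=0}^\infty\|T^n\|<\infty$. Because $X$ is a Banach space, $\mathcal{L}(X)$ equipped with the operator norm is itself a Banach space, and in a Banach space an absolutely convergent series converges. Therefore $S:=\sum_{n=0}^\infty T^n$ exists in $\mathcal{L}(X)$; equivalently, the partial sums $S_N:=\sum_{n=0}^N T^n$ satisfy $\|S_N-S\|\to 0$ as $N\to\infty$.

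Second I would identify $S$ with $(\Id-T)^{-1}$. A direct telescoping computation gives $(\Id-T)S_N = S_N(\Id-T) = \Id-T^{N+1}$. Since $\|T^{N+1}\|\le\|T\|^{N+1}\to 0$, we have $T^{N+1}\to 0$ in operator norm as $N\to\infty$. Using that left- and right-multiplication by the fixed bounded operator $\Id-T$ are continuous on $\mathcal{L}(X)$, letting $N\to\infty$ in the telescoping identity yields $(\Id-T)S = S(\Id-T) = \Id$. Hence $\Id-T$ is invertible with $(\Id-T)^{-1} = S = \sum_{n=0}^\infty T^n$, which is the claim.

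There is no substantial obstacle here; the only points deserving an explicit word are the completeness of $\mathcal{L}(X)$ (this is exactly where the hypothesis that $X$ is Banach is used) and the continuity of multiplication by a fixed operator, which is what allows the telescoping identity to be passed to the limit.
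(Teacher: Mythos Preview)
Your proof is correct and is exactly the standard argument. The paper itself does not give a proof of this lemma but simply cites \cite[Proposition I.1.6]{Takesaki}, so your write-up is, if anything, more detailed than what the paper provides.
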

This is a well-known result in functional analysis and can be found for instance in \cite[Propostion I.1.6]{Takesaki}.

\begin{definition}
For $T\in\mathcal{L}(X)$ the resolvent set $\sigma(T)$ is the set of all $\lambda\in\C$ such that $(T-\lambda\Id)$ has a bounded inverse.\\
The operator-valued function $R\colon \sigma(T)\to \mathcal{L}(X),\ \lambda\mapsto (T-\lambda\Id)^{-1}$ is called the resolvent operator of $T$.
\end{definition}
For abbreviation, we write $T-\lambda$ instead of $T-\lambda\Id$ when no confusion can arise.

\begin{lemma}\label{Lemma: resolvent equation}
We have
\[R(\lambda')-R(\lambda) = (\lambda'-\lambda)R(\lambda')R(\lambda)\]
for all $\lambda,\lambda'\in\sigma(T)$. In particular, $R(\lambda)$ and $R(\lambda')$ commute.
\end{lemma}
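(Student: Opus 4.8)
The plan is to derive the identity purely algebraically from the defining relation of the resolvent, and then to read off commutativity as a consequence of the symmetry of the resulting formula, taking care never to interchange $R(\lambda)$ and $R(\lambda')$ before that symmetry has been established.

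First, recall that for $\lambda\in\sigma(T)$ the operator $R(\lambda)=(T-\lambda)^{-1}$ satisfies $(T-\lambda)R(\lambda)=R(\lambda)(T-\lambda)=\Id$. The key observation is the trivial identity $(T-\lambda)-(T-\lambda')=(\lambda'-\lambda)\Id$. Multiplying it on the left by $R(\lambda')$ and on the right by $R(\lambda)$ gives
\[R(\lambda')(T-\lambda)R(\lambda)-R(\lambda')(T-\lambda')R(\lambda)=(\lambda'-\lambda)R(\lambda')R(\lambda).\]
Using $(T-\lambda)R(\lambda)=\Id$ in the first term and $R(\lambda')(T-\lambda')=\Id$ in the second, the left-hand side collapses to $R(\lambda')-R(\lambda)$, which is exactly the claimed resolvent equation.

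For the commutativity statement, assume $\lambda\neq\lambda'$, the case $\lambda=\lambda'$ being vacuous. Dividing the resolvent equation by $\lambda'-\lambda$ yields $R(\lambda')R(\lambda)=\frac{R(\lambda')-R(\lambda)}{\lambda'-\lambda}$, and the right-hand side is invariant under swapping $\lambda$ and $\lambda'$; running the same computation with the roles of $\lambda$ and $\lambda'$ interchanged shows that this common value also equals $R(\lambda)R(\lambda')$. Hence $R(\lambda)$ and $R(\lambda')$ commute.

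There is essentially no obstacle here: the proof is a one-line manipulation once one resists the temptation to commute the resolvents prematurely. The only point requiring the slightest care is to multiply the scalar identity on the correct sides so that each resolvent cancels precisely the factor it inverts; writing $T-\lambda$ rather than $T-\lambda\Id$ throughout, as in the rest of the paper, keeps this bookkeeping transparent.
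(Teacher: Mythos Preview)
Your argument is correct and is essentially the same computation as the paper's: both insert the identities $R(\lambda')(T-\lambda')=\Id$ and $(T-\lambda)R(\lambda)=\Id$ around the trivial difference $(T-\lambda)-(T-\lambda')=(\lambda'-\lambda)\Id$, just traversed in opposite directions. Your explicit remark on commutativity via the symmetry of $\frac{R(\lambda')-R(\lambda)}{\lambda'-\lambda}$ spells out what the paper leaves implicit.
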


\begin{proof}
The following easy computation shows the statement:
\begin{align*}
R(\lambda')-R(\lambda) &= R(\lambda')(T-\lambda)R(\lambda)-R(\lambda')(T-\lambda')R(\lambda)\\
&= -R(\lambda')\lambda R(\lambda) + R(\lambda')\lambda' R(\lambda) \\
&= (\lambda'-\lambda)R(\lambda')R(\lambda).
\end{align*}

\end{proof}

\begin{proposition}\label{Neumann series for the resolvent}
Let $\lambda,\lambda_0\in \sigma(T)$ with $|\lambda-\lambda_0|<\|R(\lambda_0)\|^{-1}$ then the so-called first Neumann series for the resolvent $\sum_{n=0}^\infty (\lambda-\lambda_0)^n R(\lambda_0)^{n+1}$ is convergent. In this case, we have
\[R(\lambda)= \sum_{n=0}^\infty (\lambda-\lambda_0)^n R(\lambda_0)^{n+1}.\]
\end{proposition}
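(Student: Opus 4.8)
The plan is to prove the identity by the same telescoping trick used for the geometric (Neumann) series, exploiting the resolvent equation from Lemma~\ref{Lemma: resolvent equation}. First I would observe that convergence of the series $\sum_{n=0}^\infty (\lambda-\lambda_0)^n R(\lambda_0)^{n+1}$ in operator norm is immediate under the hypothesis $|\lambda-\lambda_0| < \|R(\lambda_0)\|^{-1}$: the $n$-th term is bounded by $\|R(\lambda_0)\|\cdot\bigl(|\lambda-\lambda_0|\,\|R(\lambda_0)\|\bigr)^n$, which is a convergent geometric series since $|\lambda-\lambda_0|\,\|R(\lambda_0)\| < 1$. Since $\mathcal{L}(X)$ is complete, the series defines a bounded operator; call it $S$.

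Next I would identify $S$ with $R(\lambda)$. The clean algebraic way is to factor the resolvent of $T-\lambda$ through that of $T-\lambda_0$: write
\[
T-\lambda = (T-\lambda_0) - (\lambda-\lambda_0) = (T-\lambda_0)\bigl(\Id - (\lambda-\lambda_0)R(\lambda_0)\bigr).
\]
Because $\|(\lambda-\lambda_0)R(\lambda_0)\| < 1$, Lemma~\ref{Lemma: Neumann series} applies to the factor $\Id - (\lambda-\lambda_0)R(\lambda_0)$, giving
\[
\bigl(\Id - (\lambda-\lambda_0)R(\lambda_0)\bigr)^{-1} = \sum_{n=0}^\infty (\lambda-\lambda_0)^n R(\lambda_0)^n.
\]
Inverting the product (both factors being invertible) yields
\[
(T-\lambda)^{-1} = \bigl(\Id - (\lambda-\lambda_0)R(\lambda_0)\bigr)^{-1}(T-\lambda_0)^{-1} = \sum_{n=0}^\infty (\lambda-\lambda_0)^n R(\lambda_0)^{n+1},
\]
which is exactly the claimed formula and also shows $\lambda\in\sigma(T)$. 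Alternatively, one can avoid invoking Lemma~\ref{Lemma: Neumann series} and instead verify directly that $(T-\lambda)S = S(T-\lambda) = \Id$ by multiplying through and telescoping, using $R(\lambda_0)(T-\lambda_0) = (T-\lambda_0)R(\lambda_0) = \Id$; the cross terms cancel in pairs exactly as in the proof of the Neumann series.

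I do not anticipate a genuine obstacle here — this is a standard computation. The only point requiring a little care is the bookkeeping in the factorization step: one must check that $(T-\lambda_0)$ and $\Id - (\lambda-\lambda_0)R(\lambda_0)$ are both boundedly invertible (the first by hypothesis $\lambda_0\in\sigma(T)$, the second by the small-norm condition and Lemma~\ref{Lemma: Neumann series}) before asserting that their product is invertible with inverse the product of inverses in the reversed order, and that this inverse is two-sided. Since $R(\lambda_0)$ commutes with $T-\lambda_0$, the order of the two factors is in fact immaterial, which streamlines the verification. This gives the result.
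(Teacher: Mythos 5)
Your proof is correct. It rests on the same identity as the paper's proof, namely
\[
R(\lambda) \;=\; \bigl(\Id-(\lambda-\lambda_0)R(\lambda_0)\bigr)^{-1}R(\lambda_0),
\]
followed by an application of Lemma~\ref{Lemma: Neumann series}, so the two arguments are essentially the same. The one substantive difference is how the identity is obtained: the paper derives it from the resolvent equation (Lemma~\ref{Lemma: resolvent equation}), which presupposes that $R(\lambda)$ already exists and is therefore consistent with the hypothesis $\lambda\in\sigma(T)$; you instead factor the operator itself as $T-\lambda=(T-\lambda_0)\bigl(\Id-(\lambda-\lambda_0)R(\lambda_0)\bigr)$ and invert the product. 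Your route is marginally more self-contained: it does not invoke Lemma~\ref{Lemma: resolvent equation} at all, and it \emph{proves} that $T-\lambda$ is boundedly invertible whenever $|\lambda-\lambda_0|<\|R(\lambda_0)\|^{-1}$, so the assumption $\lambda\in\sigma(T)$ in the statement becomes redundant (this is the standard fact that the resolvent set is open). Your care about inverting the product of two commuting invertible factors in the right order is exactly the bookkeeping the paper leaves implicit.
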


This shows that $R(\cdot)$ is holomorphic on $\sigma(T)$.

\begin{proof}
Let $\lambda,\lambda_0\in\sigma(T)$. By Lemma \ref{Lemma: resolvent equation} we obtain 
\[R(\lambda) = R(\lambda_0)\left(1-(\lambda-\lambda_0)R(\lambda_0) \right)^{-1}.\]
If $\|(\lambda-\lambda_0)R(\lambda_0)\|<1$, Lemma \ref{Lemma: Neumann series} implies that 
\[R(\lambda) = R(\lambda_0)\sum_{n=0}^\infty (\lambda-\lambda_0)^n R(\lambda_0)^n,\]
which proves the statement.
\end{proof}

%%%%%%%%%%%%%%%%%%%%%%%%%%%%%%%%%%%%%%%%%%%%%%%%%%%%%%%%%%%%%%%%%%%%%%%%%%%%%%%%%%%%%%%%%%%%%%%%%%%%%%%%%%%%%%%%%%%%%%%%%%%%%%%%%%%%%%%%%%%%%%%%%%%%%%%%%%%%%%%%%%%%%%%%%%%%%%%%%%%%%%%%%%%%%%%%%%%%%%%%%%%%%%%%%%%%%%%%%%%%%%%%%%%%%%%%%%%%%%%

\subsection{Perturbation of the resolvent operator}

We define
\[R(t,\lambda)= (T_t-\lambda)^{-1}\]
for any $(t,\lambda)$ with $\lambda\in\sigma(T_t)$. We already know from the last proposition that $R(t,\lambda)$ is holomorphic in $\lambda$ for each fixed $t$. Now we will show that $R(t,\lambda)$ is holomorphic in both variables.

\begin{proposition}\label{Prop: resolvent holomorphic}
Let $\mathcal{T}\colon D\to\mathcal{L}(X)$ be holomorphic. Then $R(t,\lambda)$ is holomorphic in both variables $t$ and $\lambda$.
\end{proposition}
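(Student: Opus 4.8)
The plan is to reduce holomorphicity in both variables to the already-established one-variable statements plus a standard perturbation-series argument for the $t$-dependence. First I would fix a point $(t_0,\lambda_0)$ with $\lambda_0\in\sigma(T_{t_0})$ and show that $\lambda_0\in\sigma(T_t)$ for all $t$ near $t_0$, together with a convergent expansion of $R(t,\lambda_0)$ in powers of $t-t_0$. Since $\mathcal T$ is holomorphic, we may write $T_t=T_{t_0}+(T_t-T_{t_0})$ with $\|T_t-T_{t_0}\|\to 0$ as $t\to t_0$; then
\[
T_t-\lambda_0=\bigl(\Id+(T_t-T_{t_0})R(t_0,\lambda_0)\bigr)(T_{t_0}-\lambda_0),
\]
and for $\|T_t-T_{t_0}\|<\|R(t_0,\lambda_0)\|^{-1}$ the first factor is invertible by the Neumann series (Lemma \ref{Lemma: Neumann series}), so $\lambda_0\in\sigma(T_t)$ and
\[
R(t,\lambda_0)=R(t_0,\lambda_0)\sum_{n=0}^\infty\bigl(-(T_t-T_{t_0})R(t_0,\lambda_0)\bigr)^n.
\]
This exhibits $t\mapsto R(t,\lambda_0)$ as a locally uniform limit of operator-valued polynomials in $(T_t-T_{t_0})$, hence holomorphic in $t$ at $t_0$ (one may also simply observe the right-hand side is a composition/product of holomorphic operator-valued functions).

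Next I would combine this with Proposition \ref{Neumann series for the resolvent}: for $\lambda$ near $\lambda_0$ with $|\lambda-\lambda_0|<\|R(t,\lambda_0)\|^{-1}$ we have
\[
R(t,\lambda)=\sum_{n=0}^\infty (\lambda-\lambda_0)^n R(t,\lambda_0)^{n+1}.
\]
Each coefficient $R(t,\lambda_0)^{n+1}$ is holomorphic in $t$ by the previous step (products of holomorphic operator-valued functions are holomorphic), and on a small enough bidisc around $(t_0,\lambda_0)$ the series converges uniformly in $(t,\lambda)$ because $\|R(t,\lambda_0)\|$ stays bounded for $t$ near $t_0$ (continuity of the Neumann expansion above). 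A uniform limit of functions holomorphic in each variable separately is holomorphic in each variable separately, and by Hartogs' theorem (or, more elementarily, by noting joint continuity plus separate holomorphicity, which suffices here) $R(t,\lambda)$ is holomorphic in the pair $(t,\lambda)$ on this neighbourhood. Since $(t_0,\lambda_0)$ was an arbitrary point of the domain of definition, this gives the claim.

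The main obstacle, and the only place requiring care, is the passage from \emph{separate} holomorphicity in $t$ and in $\lambda$ to \emph{joint} holomorphicity. I would handle this by making the convergence of the double expansion locally uniform in both variables simultaneously — choosing radii $\delta$ (in $t$) and $\varepsilon$ (in $\lambda$) so small that $\|T_t-T_{t_0}\|\,\|R(t_0,\lambda_0)\|\le \tfrac12$ for $|t-t_0|<\delta$ (bounding $\|R(t,\lambda_0)\|\le 2\|R(t_0,\lambda_0)\|$ there) and then $|\lambda-\lambda_0|<\varepsilon\le(4\|R(t_0,\lambda_0)\|)^{-1}$ — so that the tail of $\sum_n(\lambda-\lambda_0)^nR(t,\lambda_0)^{n+1}$ is uniformly small. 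Once uniform convergence on a full bidisc is in hand, the operator-valued analogue of the standard fact that a uniform limit of holomorphic functions is holomorphic (via testing against functionals in $X^*$ and Morera/Hartogs in the scalar case, as in the references \cite{Baumgaertel,Arendt} cited in the appendix) closes the argument. Everything else is a routine manipulation of Neumann series already available from Lemmas \ref{Lemma: Neumann series}, \ref{Lemma: resolvent equation} and Proposition \ref{Neumann series for the resolvent}.
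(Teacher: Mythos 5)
Your argument is correct, and it rests on the same basic mechanism as the paper's proof --- a Neumann-series inversion of the perturbed operator around the base point $(t_0,\lambda_0)$ --- but you organize it differently. The paper treats the two perturbations \emph{simultaneously}: it factors
\[
T_t-\lambda=(T_{t_0}-\lambda_0)\Bigl(\Id-\bigl(\lambda-\lambda_0-\sum_{n\ge 1}T^{(n)}(t-t_0)^n\bigr)R(t_0,\lambda_0)\Bigr),
\]
inverts the second factor by a single Neumann series under the condition $|\lambda-\lambda_0|+\sum_{n\ge1}|t-t_0|^n\|T^{(n)}\|<\|R(t_0,\lambda_0)\|^{-1}$, and thereby obtains an absolutely convergent \emph{double} power series in $(t-t_0)$ and $(\lambda-\lambda_0)$ on a bidisc, so joint holomorphicity is immediate and no separate-to-joint step is needed. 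You instead nest the two expansions: first in $t$ at fixed $\lambda_0$ (your factorization $T_t-\lambda_0=(\Id+(T_t-T_{t_0})R(t_0,\lambda_0))(T_{t_0}-\lambda_0)$ is correct, as is the resulting bound $\|R(t,\lambda_0)\|\le 2\|R(t_0,\lambda_0)\|$), then in $\lambda$ via Proposition \ref{Neumann series for the resolvent}. This forces you to address the passage from separate to joint holomorphicity, which you do adequately: the partial sums $\sum_{n=0}^N(\lambda-\lambda_0)^nR(t,\lambda_0)^{n+1}$ are jointly holomorphic and the convergence is uniform on your bidisc, so the limit is jointly holomorphic (via functionals and the scalar theory, as in the cited references). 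The trade-off is that the paper's one-shot expansion is shorter and yields the explicit double series (which the subsequent remark on the second Neumann series for the resolvent actually uses), whereas your version isolates the $t$-dependence of $R(t,\lambda_0)$ cleanly but costs you the extra uniform-convergence/Hartogs discussion. Both are complete proofs of the proposition.
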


\begin{proof}
Let $(t_0,\lambda_0)$ such that $\lambda_0\in\sigma(T_{t_0})$. Since $\mathcal{T}$ is holomorphic we can write $T_t = \sum_{n=0}^\infty T^{(n)}(t-t_0)^n$ for $|t-t_0|$ small. We have
\begin{align*}
T_t-\lambda &= T_{t_0}-\lambda_0 - (\lambda-\lambda_0)+\sum_{n=1}^\infty T^{(n)}(t-t_0)^n\\
&= (T_{t_0}-\lambda_0)\Big(1-\big(\lambda-\lambda_0-\sum_{n=1}^\infty T^{(n)}(t-t_0)^n\big)R(t_0,\lambda_0) \Big).
\end{align*}  
Thus,
\[R(t,\lambda) = (T_t-\lambda)^{-1} = R(t_0,\lambda_0)\Big(1-\big(\lambda-\lambda_0-\sum_{n=1}^\infty T^{(n)}(t-t_0)^n\big)R(t_0,\lambda_0) \Big)^{-1}.\]
If
\begin{equation}\label{condition for resolvent (power series)}
\|\big(\lambda-\lambda_0-\sum_{n=1}^\infty T^{(n)}(t-t_0)^n\big)R(t_0,\lambda_0)\|<1,
\end{equation}
the last expression can be written as a double series in $\lambda$ and $t$ by Lemma \nolinebreak \ref{Lemma: Neumann series}. This condition is satisfied if 
\begin{equation}\label{condition for resolvent (power series) II}
|\lambda-\lambda_0|+\sum_{n=1}^\infty |t-t_0|^n\|T^{(n)}\| < \|R(t_0,\lambda_0)\|^{-1},
\end{equation}

which is the case if $|\lambda-\lambda_0|$ and $|t-t_0|$ are small enough.
\end{proof}

\begin{remark}
If we fix $\lambda$ and write $R(t,\lambda)$ as a power series in $t$ at $t_0$ we get
\[R(t,\lambda) = R(t_0,\lambda)\Big(\sum_{k=0}^\infty\big(-\sum_{n=1}^\infty T^{(n)}(t-t_0)^n R(t_0,\lambda)\big)^k\Big).\]
We can rewrite this as
\begin{equation}\label{eq. second Neumann}
R(t,\lambda) = \sum_{k=0}^\infty (t-t_0)^k R^{(k)}(\lambda),
\end{equation}
with 
\[R^{(k)}(\lambda) = \sum_{\substack{k_1+\cdots+k_n=k\\k_i\ge 1,\,n\ge 1}} (-1)^n R(t_0,\lambda)T^{(k_1)}R(t_0,\lambda)T^{(k_2)}\cdots T^{(k_n)}R(t_0,\lambda).\]
The right-hand side of equation \eqref{eq. second Neumann} is called the \textit{second Neumann series for the resolvent}. 
\end{remark}

%%%%%%%%%%%%%%%%%%%%%%%%%%%%%%%%%%%%%%%%%%%%%%%%%%%%%%%%%%%%%%%%%%%%%%%%%%%%%%%%%%%%%%%%%%%%%%%%%%%%%%%%%%%%%%%%%%%%%%%%%%%%%%%%%%%%%%%%%%%%%%%%%%%%%%%%%%%%%%%%%%%%%%%%%%%%%%%%%%%%%%%%%%%%%%%%%%%%%%%%%%%%%%%%%%%%%%%%%%%%%%%%%%%%%%%%%%%%%%%%%%%%%%%%%%%%%%%%%%%%%%%%%%%%%%%%%%%%%%%%%%%%%%%%%%%%%%%%%%%%%%%%%%%%%%%%%

\subsection{Perturbation of the eigenprojection, eigenvalue, and eigenvector}

\begin{proposition}\label{eigenprojection}
Let $T\in\mathcal{L}(X)$ and $\lambda_0$ be a simple isolated eigenvalue of $T_0$. Let $\Gamma$ be a closed curve in $\sigma(T)$ with positive direction which encloses $\lambda_0$. Then
\[P_0 := - \frac{1}{2\pi i}\int_\Gamma R(\lambda)\di\lambda\]
is a projection on the eigenspace of $\lambda_0$.
\end{proposition}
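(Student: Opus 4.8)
The plan is to follow the classical Riesz–Dunford functional calculus argument, adapted to the setting of a bounded operator $T$ on the Banach space $X$. First I would recall the basic properties of the operator-valued integral $P_0 = -\frac{1}{2\pi i}\int_\Gamma R(\lambda)\di\lambda$: since $R(\lambda)$ is holomorphic on $\sigma(T)$ (Proposition \ref{Neumann series for the resolvent}) and $\Gamma$ lies in $\sigma(T)$, the integrand is continuous along $\Gamma$, so the integral converges as a Bochner-type (or Riemann-type) integral of $\mathcal{L}(X)$-valued functions and defines a bounded operator. By Cauchy's theorem for holomorphic operator-valued functions, $P_0$ does not depend on the particular choice of $\Gamma$ enclosing $\lambda_0$ and lying in $\sigma(T)$.

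The main step is to show $P_0^2 = P_0$, i.e.\ that $P_0$ is idempotent. For this I would take two closed curves $\Gamma$ and $\Gamma'$, both enclosing $\lambda_0$, with $\Gamma'$ lying strictly inside the region enclosed by $\Gamma$ (so that for $\lambda\in\Gamma$ and $\lambda'\in\Gamma'$ we have $\lambda\ne\lambda'$ and $\lambda$ is outside $\Gamma'$ while $\lambda'$ is inside $\Gamma$). Then
\begin{align*}
P_0^2 &= \frac{1}{(2\pi i)^2}\int_\Gamma\int_{\Gamma'} R(\lambda)R(\lambda')\di\lambda'\di\lambda\\
&= \frac{1}{(2\pi i)^2}\int_\Gamma\int_{\Gamma'} \frac{R(\lambda)-R(\lambda')}{\lambda-\lambda'}\di\lambda'\di\lambda,
\end{align*}
where I have used the resolvent identity from Lemma \ref{Lemma: resolvent equation} in the form $R(\lambda)R(\lambda')=\frac{R(\lambda)-R(\lambda')}{\lambda-\lambda'}$ (valid since $\lambda\ne\lambda'$ on the two curves). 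Splitting the double integral into two pieces, I would compute $\frac{1}{2\pi i}\int_{\Gamma'}\frac{\di\lambda'}{\lambda-\lambda'}=0$ for $\lambda\in\Gamma$ outside $\Gamma'$, and $\frac{1}{2\pi i}\int_\Gamma\frac{\di\lambda}{\lambda-\lambda'}=1$ for $\lambda'\in\Gamma'$ inside $\Gamma$, by the scalar Cauchy integral formula applied componentwise (i.e.\ after pairing with an arbitrary functional in $X^*$). Fubini for the $\mathcal{L}(X)$-valued integrals then yields $P_0^2 = 0 + \frac{1}{2\pi i}\int_{\Gamma'} R(\lambda')\di\lambda' \cdot(-1)\cdot(-1) = P_0$, after tracking signs carefully.

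Next I would identify the range of $P_0$ with the eigenspace of $\lambda_0$. One inclusion: since $\lambda_0$ is a simple isolated eigenvalue, the resolvent $R(\lambda)$ has a simple pole at $\lambda_0$ with residue equal to (minus) the spectral projection, and the Laurent expansion of $R(\lambda)$ around $\lambda_0$ shows $(T-\lambda_0)P_0 = -\frac{1}{2\pi i}\int_\Gamma (T-\lambda_0)R(\lambda)\di\lambda = -\frac{1}{2\pi i}\int_\Gamma\big(1+(\lambda-\lambda_0)R(\lambda)\big)\di\lambda = 0$, using that $\int_\Gamma \di\lambda = 0$ and that $(\lambda-\lambda_0)R(\lambda)$ is holomorphic inside $\Gamma$ except possibly at $\lambda_0$ where it is bounded, hence its integral vanishes. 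So $\operatorname{ran}P_0\subseteq\ker(T-\lambda_0)$. Conversely, if $Tx=\lambda_0 x$ then $R(\lambda)x = (T-\lambda)^{-1}x = \frac{1}{\lambda_0-\lambda}x$, so $P_0 x = -\frac{1}{2\pi i}\int_\Gamma\frac{1}{\lambda_0-\lambda}x\di\lambda = x$ by the Cauchy formula; hence $\ker(T-\lambda_0)\subseteq\operatorname{ran}P_0$. Combined with idempotency this gives that $P_0$ is a projection onto the eigenspace of $\lambda_0$.

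I expect the main obstacle to be the careful bookkeeping of orientations and the ``inside/outside'' geometry of the two curves $\Gamma,\Gamma'$ in the idempotency computation, together with the justification of Fubini's theorem and of interchanging the functional $\ell\in X^*$ with the operator-valued contour integrals; these are routine but error-prone. A secondary point requiring a word of care is invoking that $\lambda_0$ being a \emph{simple} isolated eigenvalue forces $R(\lambda)$ to have only a simple pole there (so that $(\lambda-\lambda_0)R(\lambda)$ stays bounded near $\lambda_0$), which is what makes the residue computation clean; this can be cited from the structure of the resolvent near an isolated point of the spectrum as developed in \cite{Kato}.
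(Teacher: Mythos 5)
Your proof is correct and follows essentially the same route as the paper: the resolvent identity combined with a double contour integral to get idempotency (the paper takes the auxiliary curve $\Gamma'$ \emph{outside} $\Gamma$ rather than inside, which changes nothing), and the same residue computation at the simple pole $\lambda_0$ — in the form $(T-\lambda_0)P_0=0$ rather than the paper's equivalent $TP_0=\lambda_0 P_0$ — to identify the range with the eigenspace. The only thing to repair is the sign bookkeeping in your final idempotency line: as written the surviving term reads $+\frac{1}{2\pi i}\int_{\Gamma'}R(\lambda')\di\lambda'=-P_0$, whereas a careful account of the prefactor $\frac{1}{(2\pi i)^2}$, the minus sign from splitting $\frac{R(\lambda)-R(\lambda')}{\lambda-\lambda'}$, and the value $2\pi i$ of the inner integral does yield $P_0^2=P_0$.
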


\begin{proof}
We need to show that: \begin{enumerate}
\item[(i)] $P_0$ is a projection, i.e.\ $P_0^2=P_0$.
\item[(ii)] $\mathbf{R}(P_0)= M_0$, where $\mathbf{R}(P_0)$ is the range of $P_0$ and $M_0$ is the eigenspace of $\lambda_0$.
\end{enumerate}
Let $\Gamma'$ be a closed curve in $\sigma(T)$ with positive direction which encloses $\lambda_0$ and lies outside $\Gamma$.
Then $\int_\Gamma R(\lambda)\di\lambda = \int_{\Gamma'} R(\lambda)\di\lambda$ and we have

\begin{align*}
(-2\pi i)^2 P_0^2 &= \int_\Gamma R(\lambda)\di\lambda\cdot \int_{\Gamma'} R(\mu)\di\mu \\
&= \int_{\Gamma'} \int_\Gamma R(\lambda) R(\mu) \di\lambda\di\mu\\
&= \int_{\Gamma'} \int_\Gamma \frac{1}{\mu-\lambda} R(\mu)\di\lambda\di\mu - \int_\Gamma \int_{\Gamma'} \frac{1}{\mu-\lambda} R(\lambda)\di\mu\di\lambda\\
&= \int_{\Gamma'} 0 \di\mu - \int_\Gamma 2\pi i R(\lambda)\di\lambda\\
&= -2\pi i \int_\Gamma R(\lambda)\di\lambda\\
&= (-2\pi i)^2 P_0,
\end{align*}

where the third equality follows by Lemma \ref{Lemma: resolvent equation}.
This shows (i). To prove (ii), we begin by showing $M_0\subseteq\mathbf{R}(P_0)$.
Let $x\in M_0$, i.e.\ $T(x)=\lambda_0 x$.  Then
\[P_0(x)= -\frac{1}{2\pi i}\int_\Gamma (T-\lambda\Id)^{-1}(x)\di\lambda = -\frac{1}{2\pi i}\int_\Gamma (\lambda_0-\lambda)^{-1}(x)\di\lambda = x.\]
Now, we proceed to show that $\mathbf{R}(P_0)\subseteq M_0$. We compute
\begin{align*}
(-2\pi i)TP_0 &= \int_\Gamma T(T-\lambda)^{-1}\di\lambda\\
&= \int_\Gamma \Id+\lambda(T-\lambda)^{-1}\di\lambda\\
&= \int_\Gamma \lambda(T-\lambda)^{-1}\di\lambda\\
&= \Res_{\lambda_0}(\lambda(T-\lambda)^{-1})\\
&= \lambda_0 \Res_{\lambda_0}((T-\lambda)^{-1})\\
&= \lambda_0 \int_\Gamma R(\lambda)\di\lambda\\
&= (-2\pi i)\lambda_0 P_0.
\end{align*}
Hence, for all $x\in X$ we get $P_0(x)\in M_0$, which completes the proof.
\end{proof}

In what follows, we assume that $\mathcal{T}\colon D\to\mathcal{L}(X)$ is holomorphic and that $\lambda_0$ is an isolated eigenvalue of $T_{t_0}$, $t_0\in D$, with algebraic multiplicity equal to one.
Let $\Gamma$ be a closed curve in $\sigma(T_{t_0})$ with positive direction which encloses $\lambda_0$.

\begin{proposition}\label{Prop: projection holomorphic}
The operator-valued function $\mathcal{P}\colon D\to \mathcal{L}(X),\quad t\to P_t:=\mathcal{P}(t)=\int_\Gamma R(t,\lambda)\di\lambda$ is holomorphic at an open neighbourhood of $t_0.$ It holds that $\dim (P_0X)=\dim (P_tX)$.
\end{proposition}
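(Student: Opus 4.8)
The plan is to prove Proposition~\ref{Prop: projection holomorphic} in two parts: first the holomorphicity of $t\mapsto P_t$ near $t_0$, then the constancy of the rank $\dim(P_t X)$.

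\textbf{Holomorphicity of $\mathcal{P}$.} First I would fix the contour $\Gamma$: since $\lambda_0$ is an isolated point of $\Sigma(T_{t_0})$, there is an $\varepsilon>0$ such that the closed disc $\overline{B_\varepsilon(\lambda_0)}$ meets $\Sigma(T_{t_0})$ only in $\lambda_0$, and I take $\Gamma=\partial B_\varepsilon(\lambda_0)$. By Proposition~\ref{Prop: resolvent holomorphic}, the map $(t,\lambda)\mapsto R(t,\lambda)$ is holomorphic on a neighbourhood of $\{t_0\}\times\Gamma$ in $\C^2$; in particular $R(t_0,\lambda)$ is defined and operator-norm continuous on the compact set $\Gamma$, hence bounded there, say $\|R(t_0,\lambda)\|\le M$ for $\lambda\in\Gamma$. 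Inspecting condition~\eqref{condition for resolvent (power series) II} in the proof of Proposition~\ref{Prop: resolvent holomorphic}, one sees it holds uniformly for $\lambda\in\Gamma$ provided $|t-t_0|$ is small enough (smaller than some $\delta$ depending only on $M$ and the coefficient bounds $\|T^{(n)}\|$); so for $|t-t_0|<\delta$ the whole curve $\Gamma$ stays inside $\sigma(T_t)$, the integral $P_t=\int_\Gamma R(t,\lambda)\,\di\lambda$ is well-defined, and on $\Gamma$ we have the uniformly convergent expansion $R(t,\lambda)=\sum_{k\ge 0}(t-t_0)^k R^{(k)}(\lambda)$ from \eqref{eq. second Neumann}. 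Integrating term by term over the compact contour $\Gamma$ (justified by uniform convergence in operator norm) gives $P_t=\sum_{k\ge 0}(t-t_0)^k\int_\Gamma R^{(k)}(\lambda)\,\di\lambda$, a norm-convergent power series in $t-t_0$; a convergent power series defines a holomorphic operator-valued function, as recalled in the Overview. Thus $\mathcal{P}$ is holomorphic on $B_\delta(t_0)$.

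\textbf{Constancy of the rank.} For this I would use a standard perturbation argument: two idempotents that are close in operator norm have the same rank. Concretely, since $t\mapsto P_t$ is continuous at $t_0$, we may shrink $\delta$ so that $\|P_t-P_0\|<1$ for $|t-t_0|<\delta$; here each $P_t$ is a projection by Proposition~\ref{eigenprojection} applied to $T_t$ (the curve $\Gamma$ encloses exactly the part of $\Sigma(T_t)$ near $\lambda_0$). Then one constructs the invertible operator $U_t:=P_tP_0+(\Id-P_t)(\Id-P_0)$, which satisfies $U_tP_0=P_tP_0=P_tU_t$, so $U_tP_0U_t^{-1}=P_t$; invertibility of $U_t$ follows because $U_t=\Id-(P_t-P_0)^2$ after a short computation, and $\|(P_t-P_0)^2\|\le\|P_t-P_0\|^2<1$, so $U_t$ is given by a Neumann series (Lemma~\ref{Lemma: Neumann series}). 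Since $P_t$ and $P_0$ are similar, their ranges are isomorphic and $\dim(P_tX)=\dim(P_0X)$. Because $\lambda_0$ has algebraic multiplicity one by hypothesis, $\dim(P_0X)=1$, so in fact $\dim(P_tX)=1$ throughout.

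\textbf{Main obstacle.} The routine-looking but genuinely delicate point is the uniformity in $\lambda\in\Gamma$ of the smallness conditions \eqref{condition for resolvent (power series)}--\eqref{condition for resolvent (power series) II}: Proposition~\ref{Prop: resolvent holomorphic} as stated only gives holomorphicity near a single point $(t_0,\lambda_0)$, whereas here we need $R(t,\lambda)$ to be jointly holomorphic on a full neighbourhood of the compact arc $\{t_0\}\times\Gamma$ and we need one $\delta$ that works for all $\lambda\in\Gamma$ simultaneously. This is where compactness of $\Gamma$ together with the uniform bound $\sup_{\lambda\in\Gamma}\|R(t_0,\lambda)\|<\infty$ is essential, and it is the step I would write out carefully; everything after that (term-by-term integration, the idempotent-similarity argument) is standard.
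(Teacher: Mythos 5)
Your argument for the holomorphicity of $t\mapsto P_t$ is essentially the paper's: both rest on the observation that $\inf_{\lambda\in\Gamma}\|R(t_0,\lambda)\|^{-1}>0$ on the compact contour, so that condition \eqref{condition for resolvent (power series) II} holds uniformly in $\lambda\in\Gamma$ for $|t-t_0|$ small, whence $\Gamma\subseteq\sigma(T_t)$, the second Neumann series \eqref{eq. second Neumann} converges uniformly on $\Gamma$, and term-by-term integration yields a norm-convergent power series for $P_t$. You are right that this uniformity is the one point that needs care, and you spell it out more explicitly than the paper does. For the rank statement the routes diverge: the paper simply cites \cite[Lemma I.4.10]{Kato}, whereas you reprove that lemma via the similarity of nearby idempotents, which makes the appendix more self-contained. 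One algebraic slip in that part: with $U_t=P_tP_0+(\Id-P_t)(\Id-P_0)$ one has $U_t=\Id-P_t-P_0+2P_tP_0$, which equals $\Id-(P_t-P_0)^2=\Id-P_t-P_0+P_tP_0+P_0P_t$ only when $P_t$ and $P_0$ commute, so your claimed identity is false in general. The fix is standard: introduce the companion operator $V_t=P_0P_t+(\Id-P_0)(\Id-P_t)$ and check that $U_tV_t=V_tU_t=\Id-(P_t-P_0)^2$, which is invertible by Lemma~\ref{Lemma: Neumann series} since $\|(P_t-P_0)^2\|\le\|P_t-P_0\|^2<1$; hence $U_t$ is invertible and the intertwining relation $U_tP_0=P_tP_0=P_tU_t$ gives $U_tP_0U_t^{-1}=P_t$ as you intended. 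With that correction the argument is complete, and it also yields the sharper conclusion $\dim(P_tX)=1$ used implicitly later.
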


\begin{proof}
Since $\inf_{\lambda\in\Gamma}\|R(t_0,\lambda)\|^{-1}>0$, from \eqref{condition for resolvent (power series) II} it follows that the second Neumann series for the resolvent is uniformly convergent for $\lambda$ if $|t-t_0|$ is sufficiently small.
In particular, $R(\lambda,t)$ is well-defined for such $(\lambda,t)$ and thus, $\Gamma\subseteq \sigma(T_t)$. Hence, $P_t$ is well-defined for $|t-t_0|$ small and due to the proposition in \ref{Prop: resolvent holomorphic} we get that $P_t$ is holomorphic at an open neighbourhood of $t_0$.
The equality of the dimensions follows by \cite[Lemma I.4.10]{Kato}.
\end{proof}

\begin{proof}[Proof of Theorem \ref{Thm: Eigenvalue, eigenvector holomorphic}]
Combining the last two propositions we see that $P_t$ is the eigenprojection for $T_t$ on the eigenspace of a simple eigenvalue $\lambda_t$ and that $P_t$ is holomorphic in $t$.
Accordingly, we deduce a perturbation series for the eigenvalue $\lambda_t$ via the formula $\lambda_t=\tr(T_tP_t)$.\\
Let $g_{0}$ be an eigenfunction of $\lambda_{0}$. Then $g_t= P_tg_{0}\in P_tX$ and thus an element of the eigenspace of $\lambda_t$. If $g_t\neq 0$ it is an eigenfunction of $\lambda_t$. $P_t$ is holomorphic and hence $g_t$ is holomorphic. Since $g_{0}\neq 0$, we have that $g_t\neq 0$ at least for $|t-t_0|$ small.
\end{proof}

%%%%%%%%%%%%%%%%%%%%%%%%%%%%%%%%%%%%%%%%%%%%%%%%%%%%%%%%%%%%%%%%%%%%%%%%%%%%%%%%%%%%%%%%%%%%%%%%%%%%%%%%%%%%%%%%%%%%%%%%%%%%%%%%%%%%%%%%%%%%%%%%%%%%%%%%%%%%%%%%%%%%%%%%%%%%%%%%%%%%%%%%%%%%%%%%%%%%%%%%%%%%%%%%%%%%%%%%%%%%%%%%%%%%%%%%%%%%%%%

\subsection{Lower bound for the radius of convergence}
Let $T_t=\sum_{n=0}^\infty (t-t_0)^n T^{(n)}$ for $t\in D$ and $\lambda_0$ be an isolated eigenvalue of $T_{t_0}$ with algebraic multiplicity equal to one. 
It follows from equation \eqref{condition for resolvent (power series)} that for a fixed $\lambda$ the power series $\sum_{n=0}^\infty R^{(n)}(\lambda)(t-t_0)^n$ is convergent, if $\|R(t_0,\lambda)\sum_{n=1}^\infty T^{(n)}(t-t_0)^n\|<1$. 

\begin{proof}[Proof of Theorem \ref{Thm: radius of convergence BR}]
Assume that $\|T^{(n)}\|\le ac^{n-1}$ with $a,c\ge 0$. Then the power series is convergent if 
\[|t-t_0|< \frac{1}{\|R(t_0,\lambda)\|\cdot a+c}.\]
Consequently, $P_t=\int_\Gamma R(t,\lambda) \di\lambda$ can be expressed as a power series if $|t-t_0|< \inf_{\lambda\in\Gamma}\frac{1}{\|R(t_0,\lambda)\|\cdot a +c}$.
\end{proof}

The so obtained lower bound of the radius of convergence of $P_t$, and therewith of $\lambda_t$, depends crucially on the chosen curve $\Gamma$. It is worthwhile to get this bound as large as possible.\\

\begin{proof}[Proof of Corollary \ref{Cor: radius of convergence HR}]
If $T_{t_0}$ is a normal operator on a Hilbert space we have that $R(t_0,\lambda)$ is normal and as a consequence the operator norm of $R(t_0,\lambda)$ is equal to the spectral radius of $R(t_0,\lambda)$.
From this we conclude
\[\|R(t_0,\lambda)\| = \dist(\lambda,\Sigma(T_{t_0}))^{-1},\]
where $\Sigma(T_{t_0})$ is the spectrum of $T_{t_0}$ and $\dist$ is the Hausdorff distance. Let $d:=\dist(\lambda_0,\Sigma(T_{t_0})\setminus \{\lambda_0\})$ and let $\Gamma$ be a circle with radius $\frac{d}{2}$ and center $\lambda_0$. Then $\|R(t_0,\lambda)\|=\frac{2}{d}$ for every $\lambda\in\Gamma$ and we get
\[r\ge \frac{1}{\frac{2a}{d}+c}.\]
\end{proof}

\end{appendices}

\end{document}